\documentclass[12pt]{amsart}
\pdfoutput=1

\usepackage[paperheight=11in, paperwidth=8.5in, left=1in, top=1.25in, right=1in, bottom=1.25in]{geometry}
\usepackage[linktocpage=true, linktoc=all, colorlinks=true, linkcolor=black, citecolor=black, filecolor=black, urlcolor=black, pagebackref=false, pdfstartpage={1}, pdfstartview={FitH}, pdftitle={}, pdfauthor={}, pdfsubject={}, pdfcreator={}, pdfproducer={}, pdfkeywords={}, bookmarksdepth=99]{hyperref}
\usepackage{afterpage}
\usepackage{amsfonts}
\usepackage{amsmath}
\allowdisplaybreaks[4]
\usepackage{amssymb}
\usepackage{amsthm}
\usepackage{mathtools}
\usepackage{array}
\usepackage{arydshln}
\usepackage{bbm}
\usepackage[justification=centering]{caption}
\usepackage[capitalize,nameinlink,noabbrev,compress]{cleveref}

\usepackage{enumerate}
\usepackage{enumitem}
\usepackage{float}
\restylefloat{figure}
\usepackage{graphicx}
\usepackage{kbordermatrix}
\usepackage{mathrsfs}
\usepackage[framemethod=tikz,ntheorem,xcolor]{mdframed}
\usepackage{multirow}
\usepackage{parcolumns}
\usepackage{scalerel}
\usepackage{tikz}\pdfpageattr{/Group <</S /Transparency /I true /CS /DeviceRGB>>}
\usetikzlibrary{arrows, calc, cd, decorations.markings, intersections, matrix, positioning, through}
\tikzset{commutative diagrams/.cd, every label/.append style = {font = \normalsize}}
\usepackage[all]{xy}
\usepackage[aligntableaux=center]{ytableau}

\usepackage{comment}

\numberwithin{equation}{section}
\newtheorem{theorem}{Theorem}
\AfterEndEnvironment{theorem}{\noindent\ignorespaces}
\numberwithin{theorem}{section}

\AfterEndEnvironment{conjecture}{\noindent\ignorespaces}
\newtheorem{corollary}[theorem]{Corollary}
\AfterEndEnvironment{corollary}{\noindent\ignorespaces}
\newtheorem{lemma}[theorem]{Lemma}
\AfterEndEnvironment{lemma}{\noindent\ignorespaces}
\newtheorem{problem}[theorem]{Problem}
\AfterEndEnvironment{problem}{\noindent\ignorespaces}
\newtheorem{proposition}[theorem]{Proposition}
\AfterEndEnvironment{proposition}{\noindent\ignorespaces}

\theoremstyle{definition}
\newtheorem{definition}[theorem]{Definition}
\AfterEndEnvironment{definition}{\noindent\ignorespaces}
\newtheorem{example_no_qed}[theorem]{Example}
\newenvironment{example}[1][]{\begin{example_no_qed}[#1]\pushQED{\qed}}{\popQED\end{example_no_qed}}
\AfterEndEnvironment{example}{\noindent\ignorespaces}
\newtheorem{remark}[theorem]{Remark}
\AfterEndEnvironment{remark}{\noindent\ignorespaces}

\theoremstyle{remark}

\AfterEndEnvironment{claim}{\noindent\ignorespaces}
\newtheorem*{claim_proof_no_qed}{Proof of Claim}

\AfterEndEnvironment{claim_proof}{\noindent\ignorespaces}

\AddToHook{env/conjecture/begin}{\crefalias{theorem}{conjecture}}
\AddToHook{env/corollary/begin}{\crefalias{theorem}{corollary}}
\AddToHook{env/lemma/begin}{\crefalias{theorem}{lemma}}
\AddToHook{env/problem/begin}{\crefalias{theorem}{problem}}
\AddToHook{env/proposition/begin}{\crefalias{theorem}{proposition}}
\AddToHook{env/definition/begin}{\crefalias{theorem}{definition}}
\AddToHook{env/example/begin}{\crefalias{theorem}{example}}
\AddToHook{env/example_no_qed/begin}{\crefalias{theorem}{example_no_qed}}
\AddToHook{env/remark/begin}{\crefalias{theorem}{remark}}

\crefname{theorem}{Theorem}{Theorems}
\crefname{conjecture}{Conjecture}{Conjectures}
\crefname{corollary}{Corollary}{Corollaries}
\crefname{lemma}{Lemma}{Lemmas}
\crefname{problem}{Problem}{Problems}
\crefname{proposition}{Proposition}{Propositions}
\crefname{definition}{Definition}{Definitions}
\crefname{example}{Example}{Examples}
\crefname{example_no_qed}{Example}{Examples}
\crefname{remark}{Remark}{Remarks}

\DeclareMathOperator{\cotype}{cotype}

\renewcommand{\eqref}[1]{\hyperref[#1]{\textup{(\ref*{#1})}}}

\DeclareMathOperator{\Fl}{Fl}

\DeclareMathOperator{\Gr}{Gr}

\DeclareMathOperator{\Hom}{Hom}

\DeclareMathOperator{\id}{id}

\DeclareMathOperator{\SL}{SL}

\DeclareMathOperator{\SO}{SO}

\DeclareMathOperator{\Sp}{Sp}

\DeclareMathOperator{\Stab}{Stab}

\newcommand{\transpose}[1]{{#1}{\hspace*{-0.2pt}\raisebox{4pt}{$\scriptstyle\mathsf{T}$}\hspace*{0.5pt}}}

\DeclareMathOperator{\type}{type}

\newcommand{\CC}{\mathbb{C}}

\newcommand{\PP}{\mathbb{P}}

\newcommand{\RR}{\mathbb{R}}

\newcommand{\cO}{\mathcal{O}}
\newcommand{\cP}{\mathcal{P}}
\newcommand{\cQ}{\mathcal{Q}}

\newcommand{\cS}{\mathcal{S}}

\newcommand{\affine}{\mathrm{affine}}

\newcommand{\coset}{\mathrm{coset}}

\DeclareMathOperator{\LG}{LG}
\newcommand{\newword}[1]{{\bfseries\itshape #1}}
\DeclareMathOperator{\OG}{OG}

\newcommand{\standard}{\mathrm{standard}}

\newcommand{\symgp}[1]{\mathfrak{S}_{#1}}

\title{From amplituhedra to flagtopes}

\author{Grant T. Barkley}
\address{Department of Mathematics, University of Michigan}
\email{\href{mailto:gbarkley@umich.edu}{gbarkley@umich.edu}}
\author{Steven N. Karp}
\address{Department of Mathematics, University of Notre Dame}
\email{\href{mailto:skarp2@nd.edu}{skarp2@nd.edu}}
\author{Kaitao Xie}
\address{Department of Mathematics, The University of Hong Kong}
\email{\href{mailto:kaitaoxie@connect.hku.hk}{kaitaoxie@connect.hku.hk}}

\subjclass[2020]{14M15, 15B48, 20G20, 20F55, 81T13}

\begin{document}

\begin{abstract}
The amplituhedron was introduced by Arkani-Hamed and Trnka (2014) to geometrically encode scattering amplitudes in high-energy physics. It can be viewed as the generalization of a cyclic polytope into the Grassmannian, and is defined to be the projection of the totally nonnegative part of one Grassmannian with respect to a totally negative element of a second Grassmannian. We construct a Lie-theoretic generalization of amplituhedra called \emph{flag amplituhedra}, by replacing the pair of Grassmannians in the definition with an arbitrary pair of partial flag varieties for a reductive group $G$. We study in detail an example of a flag amplituhedron which is not a Grassmannian amplituhedron; it is a curved hexagon in $\mathbb{P}^1\times \mathbb{P}^1$.
\end{abstract}

\maketitle

\section{Introduction}\label{sec:introduction}

\noindent The \newword{amplituhedron} $\mathcal{A}_{n,k,m}(Z)$ was introduced by Arkani-Hamed and Trnka \cite{arkani-hamed_trnka14} in order to give a geometric basis for computing scattering amplitudes in physics. It has since been intensively studied in both physics and combinatorics; see \cref{sec:problems} and the survey \cite{williams23} for more detailed discussion. The purpose of this paper is to formulate a Lie-theoretic generalization of amplituhedra, which we propose as an interesting object for further study.

In order to explain our construction, we recall the definition of the amplituhedron. It depends on $Z$, which is a $(k+m)\times n$ real matrix whose maximal minors are all positive. We view $Z$ as a linear map from $\mathbb{C}^n$ to $\mathbb{C}^{k+m}$, which induces a rational map $Z_{\Gr} : \Gr_{k,n}(\CC) \dashrightarrow \Gr_{k,k+m}(\CC)$, where $\Gr_{k,m}(\CC)$ is the set of all $k$-dimensional linear subspaces of $\CC^m$. The map $Z_{\Gr}$ is well-defined on the totally nonnegative Grassmannian $\Gr_{k,n}^{\ge 0}$ (the subset where all Pl\"{u}cker coordinates are nonnegative), and the amplituhedron is defined to be its image:
\[
\mathcal{A}_{n,k,m}(Z) \coloneqq Z_{\Gr}(\Gr_{k,n}^{\ge 0}) \subseteq \Gr_{k,k+m}(\CC).
\]
When $k=1$, the amplituhedron can be realized as a cyclic polytope in $\mathbb{R}^m$ with $n$ vertices \cite{sturmfels88}; when $m=1$, it can be realized as the bounded complex of a cyclic hyperplane arrangement in $\mathbb{R}^k$ with $n$ hyperplanes \cite{karp_williams19} (see \cref{figure_amplituhedra} for examples).

Following \cite{karp_williams19}, we can equivalently formulate the amplituhedron by working with the map
\[
\rho_{\ker(Z)} : \Gr_{k,n}(\CC) \dashrightarrow \Gr_{n-m,n}(\CC), \enspace V\mapsto V + \ker(Z).
\]
This only depends on $\ker(Z)$ (rather than $Z$), which is an element of the totally negative Grassmannian $\Gr_{n-k-m,n}^{<0}$. We then have
\[
\mathcal{A}_{n,k,m}(Z) \simeq \rho_{\ker(Z)}(\Gr_{k,n}^{\ge 0}),
\]
where the image lies in the Grassmannian $\Gr_k(\mathbb{C}^n/\ker(Z))$ of all $k$-dimensional subspaces of $\mathbb{C}^n/\ker(Z)$.

Our main contribution generalizes this construction by replacing the two Grassmannians $\Gr_{n-k-m,n}(\CC)$ and $\Gr_{k,n}(\CC)$ above with two arbitrary partial flag varieties of a reductive group $G$. Explicitly, given two subsets $J$ and $J'$ of the set of simple roots $I$, we replace $\Gr_{n-k-m,n}(\CC)$ with $\cP^J(G)$, the flag variety of all parabolic subgroups of $G$ conjugate to $P_J^+$; and we replace $\Gr_{k,n}(\CC)$ with $\cQ^{J'}(G)$, the flag variety of all parabolic subgroups conjugate to $P_{J'}^-$ (all notation is defined in \cref{sec:flagtopes}). Given $P\in\cP^J(G)$, we also replace $\rho_{\ker(Z)}$ with a rational map $\rho_P : \cQ^{J'}(G) \dashrightarrow \cQ^{J\cap J'}(L)$, where $L = P/U_P$ is the Levi factor of $P$. This map essentially appeared in work of Borel and Tits \cite{borel_tits65}; we call it \newword{$P$-restriction}. Also, $\cQ^{J'}(G)$ has a totally nonnegative part $\cQ^{J'}_{\ge 0}(G)$ defined by Lusztig \cite{lusztig98}, and similarly $\cP^J(G)$ has a totally negative part $\cP^J_{<0}(G)$. Then our generalization of amplituhedra is as follows:
\begin{figure}[tb]
\begin{center}
\begin{align*}
\arraycolsep=24pt
\begin{array}{cc}
\includegraphics[scale=0.7]{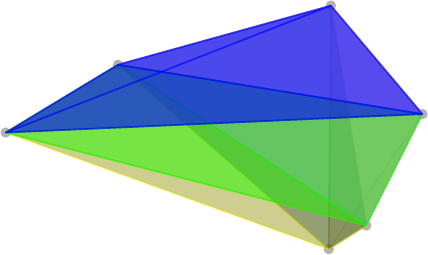} & \includegraphics[scale=0.9]{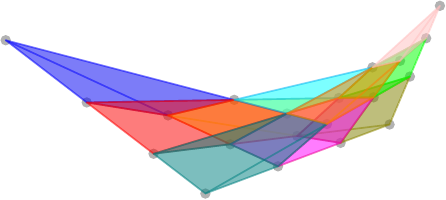} \\
\hspace*{11pt}\mathcal{A}_{6,1,3}(Z) & \hspace*{-7pt}\mathcal{A}_{6,3,1}(Z)
\end{array}
\end{align*}
\caption{When $k=1$, every amplituhedron is a cyclic polytope \cite{sturmfels88}. When $m=1$, we can identify any amplituhedron with the complex of bounded faces of a cyclic hyperplane arrangement \cite{karp_williams19}.}
\label{figure_amplituhedra}
\end{center}
\end{figure}
\begin{theorem}\label{theorem_intro}
Let $P\in\mathcal P^J_{<0}(G)$ with Levi factor $L$. Then the $P$-restriction map is well-defined on $\cQ^{J'}_{\ge 0}(G)$, and we call its image $\rho_P(\cQ^{J'}_{\ge 0}(G))$ in $\cQ^{J\cap J'}(L)$ a \newword{flag amplituhedron}. When $G = \SL_n(\CC)$ and $\cP^J(G)$ and $\cQ^{J'}(G)$ are both Grassmannians, flag amplituhedra specialize to the usual amplituhedra introduced by Arkani--Hamed and Trnka.
\end{theorem}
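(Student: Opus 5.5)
The plan has two parts: (i) well-definedness of $\rho_P$ on $\cQ^{J'}_{\ge 0}(G)$, and (ii) the specialization to Grassmannian amplituhedra.

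For (i), I would first make $P$-restriction explicit. For $Q\in\cQ^{J'}(G)$, set
\[
\rho_P(Q) \coloneqq \big((Q\cap P)U_P\big)/U_P \subseteq L .
\]
This is a parabolic subgroup of $L$ precisely when $P$ and $Q$ are in generic relative position, i.e.\ the pair $(P,Q)$ lies in the open $G$-orbit on $\cP^J(G)\times\cQ^{J'}(G)$. This is the Borel--Tits setting. On the open orbit we may conjugate simultaneously to $P=P_J^+$ and $Q=P_{J'}^-$. Then $(P_J^+\cap P_{J'}^-)U_P/U_P$ is the opposite parabolic of the Levi $L_J$ associated to $J\cap J'$, which identifies $\rho_P(Q)$ with a point of $\cQ^{J\cap J'}(L)$. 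So well-definedness reduces to one claim: every $P\in\cP^J_{<0}(G)$ and $Q\in\cQ^{J'}_{\ge0}(G)$ are in generic position.

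To prove this claim, I would write $P=g P_J^+ g^{-1}$ with $g$ in the totally negative part, using Lusztig's parametrization. Similarly, write $Q=hP_{J'}^-h^{-1}$ with $h\in G_{\ge0}$, or a limit of such. Genericity is equivalent to $g^{-1}h\in P_J^+ P_{J'}^-$, i.e.\ to a big-cell condition. This in turn is equivalent to the nonvanishing of certain generalized minors $\Delta_{\omega_i,\omega_i}$ (for $i\notin J$, resp.\ $i\notin J'$) of $g^{-1}h$. With $g^{-1}$ totally positive (up to the sign twist making $P$ negative) and $h$ totally nonnegative, the product has these minors strictly of one sign. The reason is that a totally positive element times a totally nonnegative element that is nondegenerate on the relevant weight spaces is again positive on them.

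I expect the main obstacle to be the boundary of $\cQ^{J'}_{\ge0}$, where $h$ is only nonnegative and may be degenerate. To handle it, I would use Lusztig's and Rietsch's cell decomposition. Each point of $\cQ^{J'}_{\ge0}$ is $u\cdot P_{J'}^-$ with $u\in U^+_{\ge0}$ a product of positive one-parameter subgroups. One then checks that the minor $\Delta_{\omega_i,\omega_i}(g^{-1}u)$ expands, via Cauchy--Binet in the representation $V(\omega_i)$, as a sum of products of a positive matrix coefficient of $g^{-1}$ with a nonnegative one of $u$. The diagonal term has $\Delta_{\omega_i,\omega_i}(u)=1$, so the sum is strictly positive. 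Concretely, $\langle v^-_{\omega_i},g^{-1}u\,v_{\omega_i}\rangle$ expands in a canonical basis, with positive coefficients coming from $g^{-1}$ and nonnegative ones from $u\,v_{\omega_i}$, whose highest-weight component is $1$.

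For (ii), take $G=\SL_n(\CC)$, $\cP^J(G)=\Gr_{n-k-m,n}$ and $\cQ^{J'}(G)=\Gr_{k,n}$. One then computes that $L$ acts on $\CC^n/\ker(Z)$ modulo the extra blocks, and that $\cQ^{J\cap J'}(L)$ contains $\Gr_k(\CC^n/\ker(Z))$ as its relevant factor. Under this identification $\rho_P(V)$ corresponds to $(V+\ker Z)/\ker Z$, which is exactly $\rho_{\ker(Z)}$. Finally, $\cP^J_{<0}$ matches the totally negative Grassmannian, and $\cQ^{J'}_{\ge0}$ matches $\Gr^{\ge0}_{k,n}$ (Lusztig's positivity agrees with Plücker positivity, a known result). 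So the image is $\mathcal{A}_{n,k,m}(Z)$ by the reformulation of \cite{karp_williams19}.
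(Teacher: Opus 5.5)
Your reduction matches the paper's argument (\cref{negative_nonnegative_transverse} and \cref{restriction_cotype}): well-definedness comes down to every $P\in\cP^J_{<0}$ being transverse to every $Q\in\cQ^{J'}_{\ge0}$, after which one conjugates the pair to $(P_J^+,P_{J'}^-)$. One small correction: by Borel--Tits, $(P\cap Q)U_P/U_P$ is a parabolic subgroup of $L$ for \emph{every} $Q$; genericity is what makes its cotype $J\cap J'$.

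The genuine gap is in your proof of transversality, at the boundary step you flag. It is false that every point of $\cQ^{J'}_{\ge0}$ has the form $u\cdot P^-_{J'}$ with $u\in U^+_{\ge0}$. Every such point lies in $U^+\cdot P^-_{J'}$, i.e.\ is transverse to $B^+$. But $B^+$ itself lies in $\cQ^\emptyset_{\ge0}=\cP^\emptyset_{\ge0}$, as the limit of $y_{i_1}(t_1)\cdots y_{i_l}(t_l)\cdot B^+$ as all $t_j\to0$. For $G=\SL_2(\CC)$, identifying Borels with the lines they fix, $\cQ^\emptyset_{\ge0}=\{[t:1] : t\ge0\}\cup\{[1:0]\}$, and $[1:0]=B^+$ is not $u\cdot B^-$ for any $u\in U^+$. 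Passing to $h\in G_{\ge0}$ does not help, since $G_{\ge0}\cdot P^-_{J'}=U^+_{\ge0}\cdot P^-_{J'}$ by Lusztig's factorization $G_{\ge0}=U^+_{\ge0}T_{>0}U^-_{\ge0}$. The actual cell parametrizations (Marsh--Rietsch, as in \cref{sec:hexagon_tilings}) involve factors $\dot{s}_i$. Taking ``a limit of such'' also fails, because your lower bound comes from a diagonal term equal to $1$, and this does not survive renormalizing representatives. In the $\SL_2$ example with $g^{-1}=y_1(s)$, $s>0$, the relevant $(2,2)$ entry of $g^{-1}x_1(t)$ is $1+st$. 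For $Q=B^+=\dot{s}_1\cdot B^-$, however, the relevant entry of $g^{-1}\dot{s}_1$ is $-s$: it is nonzero only because of the \emph{strict} positivity of $g^{-1}$, and no diagonal term $1$ is present.

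The interior case also needs repair, though it is easy. $P_J^+P_{J'}^-$ is generally not the nonvanishing locus of any family of generalized minors; for two lines in $\CC^4$, non-transversality has codimension $3$. You only need the sufficient condition $g^{-1}h\in B^+B^-\subseteq P_J^+P_{J'}^-$. That condition is detected by $\Delta_{w_0\omega_i,w_0\omega_i}$, not by $\Delta_{\omega_i,\omega_i}$, which detects $B^-B^+$ and is identically $1$ on $U^-U^+$. Also $u\,v_{\omega_i}=v_{\omega_i}$ for $u\in U^+$, so your canonical-basis expansion collapses to one term. In fact $U^-_{>0}U^+_{\ge0}\subseteq G_{\ge0}\subseteq B^+B^-$ settles the interior at once. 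The boundary needs a genuinely different input. The paper uses the surjections $\cP^\emptyset_{<0}\to\cP^J_{<0}$ and $\cQ^\emptyset_{\ge0}\to\cQ^{J'}_{\ge0}$ to choose Borels $B\subseteq P$ and $B'\subseteq Q$ that are totally negative and totally nonnegative. It then cites the fact that such Borels are always opposed \cite[Theorem 1.7(i)]{barkley_karp}, and concludes via \cref{prop:transverseparabolics}.

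For part (ii), write $P$ as $U\in\Gr^{<0}_{a,n}$ and take $V'\in\Gr^{\ge0}_{k,n}$. You treat only $a+k<n$, where $\rho_U(V')=U+V'$, and there you agree with the paper. The paper reads the claim as saying that \emph{every} Grassmannian-pair flag amplituhedron is an amplituhedron (\cref{sec:examples_amplituhedra}). That also requires the other cases of \cref{eg:type_A_restriction}. For $a+k=n$ the image is a point. For $a+k>n$ one has $\rho_U(V')=U\cap V'$. This is matched with the Karp--Williams description $\{V'\cap W\}$ via $(V+W^\perp)^\perp=V^\perp\cap W$, \cref{orthogonal_complement}, and \cref{flagtope_change_of_conventions}.
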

\begin{figure}[tb]
\begin{center}
\[
\begin{tikzpicture}[baseline=(current bounding box.center),scale=2]
\tikzstyle{out1}=[inner sep=0,minimum size=1.2mm,circle,draw=black,fill=black,semithick]
\pgfmathsetmacro{\s}{0.9};
\pgfmathsetmacro{\xmin}{-0.5};
\pgfmathsetmacro{\xmax}{3.5};
\pgfmathsetmacro{\ymin}{-0.5};
\pgfmathsetmacro{\ymax}{3.5};
\draw[help lines,color=black!40,dashed](\xmin,\ymin)grid(\xmax,\ymax);
\draw[latex'-latex',very thick](\xmin,0)--(\xmax,0);
\draw[latex'-latex',very thick](0,\ymin)--(0,\ymax);
\node[inner sep=0]at(3,0)[label={[below=1pt]$3$}]{};
\node[inner sep=0]at(0,3)[label={[left=0pt]$3$}]{};
\node[inner sep=0]at(\xmax,0)[label={[right=-1pt]$x$}]{};
\node[inner sep=0]at(0,\ymax)[label={[above=-2pt]$y$}]{};
\draw[thick,fill=teal!30](0,0)--(2,0)--plot[smooth,samples=100,domain=2:3](\x,{(6-3*\x)/(3-2*\x)})--(3,1)--(3,3)--(1,3)--plot[smooth,samples=100,domain=1:0](\x,{(6-3*\x)/(3-2*\x)})--(0,2)--cycle;
\node[out1](v1234)at(3,3)[label={[above right=-4pt]\scalebox{\s}{$1234$}}]{};
\node[out1](v2134)at(3,2)[label={[right=-1pt]\scalebox{\s}{$2134$}}]{};
\node[out1](v1243)at(2,3)[label={[above=-2pt]\scalebox{\s}{$1243$}}]{};
\node[out1](v3124)at(3,1)[label={[right=-1pt]\scalebox{\s}{$3124$}}]{};
\node[out1](v2143)at(2,2)[label={[above right=-4pt]\scalebox{\s}{$2143$}}]{};
\node[out1](v1342)at(1,3)[label={[above=-2pt]\scalebox{\s}{$1342$}}]{};
\node[out1](v4123)at(2,0)[label={[below=3pt]\scalebox{\s}{$4123$}}]{};
\node[out1](v3142)at(1,1)[label={[below left=2pt]\scalebox{\s}{$3142\!\!$}}]{};
\node[out1](v2341)at(0,2)[label={[left=-1pt]\scalebox{\s}{$2341$}}]{};
\node[out1](v4132)at(1,0)[label={[below=3pt]\scalebox{\s}{$4132$}}]{};
\node[out1](v3241)at(0,1)[label={[left=-1pt]\scalebox{\s}{$3241$}}]{};
\node[out1](v4231)at(0,0)[label={[below left=3pt]\scalebox{\s}{$4231\!\!\!$}}]{};
\end{tikzpicture}
\]
\caption{The flag amplituhedron $\rho_V(\Fl_{1,3;4}^{\ge 0})$ from \cref{sec:hexagon}, where $V\in\Gr_{2,4}^{<0}$. The two curved boundaries are part of the two branches of the hyperbola $(2x-3)(2y-3)=-3$. We have also depicted the images of the permutation flags in $\Fl_{1,3;4}^{\ge 0}$, which are located at lattice points.}
\label{figure_hexagon}
\end{center}
\end{figure}

Just as amplituhedra are generalizations of cyclic polytopes inside Grassmannians, Lam \cite{lam16} introduced \newword{Grassmann polytopes} as generalizations of arbitrary polytopes into Grassmannians. (Every amplituhedron is a Grassmann polytope.) We similarly introduce \newword{flagtopes} in \cref{flagtope_definition}, which generalizes Lam's construction from a pair of Grassmannians to an arbitrary pair of partial flag varieties of $G$.

We have two main motivations for studying flag amplituhedra and flagtopes:
\begin{enumerate}[label=(\arabic*), leftmargin=36pt, itemsep=4pt]
\item Flagtopes provide new examples of spaces which may be of interest in physics, combinatorics, and positive geometries. For example, \cref{figure_hexagon} exhibits an example of a flag amplituhedron (which is not a classical amplituhedron studied previously) which is a curved hexagon and a positive geometry.
\item Adopting a more general Lie-theoretic framework may help solve problems about amplituhedra (such as finding their tilings and determining their topology) even in the case of Grassmannian amplituhedra, since it provides a different perspective and allows one to leverage tools from representation theory and Coxeter groups.
\end{enumerate}

\subsection*{Outline}
In \cref{sec:flagtopes} we establish the necessary background on partial flag varieties and $P$-restriction, culminating in the definition of flagtopes and flag amplituhedra in \cref{sec:flagtopes_definition}. (We prove \cref{theorem_intro} in \cref{flagtope_definition} and \cref{sec:examples_amplituhedra}.) In \cref{sec:type_A} we give an explicit description of $P$-restriction in type $A$ (when $G = \SL_n(\CC)$). In \cref{sec:examples} we show that several spaces appearing in the literature (namely tree and one-loop amplituhedra, Grassmann polytopes, ABJM amplituhedra, and Lagrangian amplituhedra) are all special cases of flagtopes. In \cref{sec:problems} we present several open problems about flagtopes, motivated by work on amplituhedra. Finally, in \cref{sec:hexagon}, we study in detail the example of a flag amplituhedron from \cref{figure_hexagon}.

\subsection*{AI usage}
The authors formulated the constructions, results, and proofs without AI assistance. The authors used ChatGPT to derive the inequalities \eqref{hexagon_equations} and the matrix $h$ in \Cref{affineexample} in the curved hexagon example, which were then confirmed by the authors (the rest of the example was done by hand and with \texttt{Maple}), as well as for finding references on $P$-restriction and for proofreading a completed draft of the paper.

\subsection*{Acknowledgments}
G.T.B.\ was supported by the National Science Foundation under Award No.\ 2503536. S.N.K.\ was partially supported by the National Science Foundation under Award No.\ 2452061 and by a travel support gift from the Simons Foundation. K.X.\ was partially supported by NSFC grant 125B2002 and the New Cornerstone Science Foundation through the New Cornerstone Investigator Program awarded to Professor Xuhua He. G.T.B.\ and K.X.\ thank University of Notre Dame for hospitality during the Thematic Program: Algebraic Combinatorics and Applications.

\section{Defining flagtopes}\label{sec:flagtopes}

\noindent In this section, we recall some background on reductive groups, parabolic subgroups, and total positivity; see \cite{borel_tits65,humphreys75,lusztig94,barkley_karp} for further details.

\subsection{Reductive groups and flag varieties}
Let $G$ be a complex connected reductive group which is split over $\RR$. We let $\cdot$ denote the action of $G$ by conjugation, so for example $g\cdot H = \{ghg^{-1} : h\in H\}$ for all $H\subseteq G$. Fix an $\RR$-split maximal torus $T\subseteq G$ and a pair of opposite Borel subgroups $B^+$ and $B^-$ such that $B^+\cap B^- = T$. Let $X(T) \coloneqq \Hom(T,\CC^\times)$ denote the \newword{weight lattice}, and let $\Phi \subseteq X(T)$ denote the set of \newword{roots}. Our choice of $B^+$ and $B^-$ induces the decomposition $\Phi = \Phi^+ \sqcup \Phi^-$, where $\Phi^+$ is the set of \newword{positive roots} and $\Phi^-$ is the set of \newword{negative roots}. Let $\{\alpha_i\mid i\in I\} \subseteq \Phi^+$ denote the set of \newword{simple roots}, where $I$ is an indexing set. Let $U^+$ and $U^-$ be the unipotent radicals of $B^+$ and $B^-$, respectively. Each positive root $\alpha$ gives rise to one-parameter \newword{root subgroups} $U_\alpha \subseteq U^+$ and $U_{-\alpha}\subseteq U^-$.

For each $i\in I$, we pick a homomorphism $\phi_i: \SL_2(\CC) \to G$, and define
\begin{align}\label{phi_pinning}
x_i(t) \coloneqq \phi_i\!\left(\begin{bmatrix}1 & t \\ 0 & 1\end{bmatrix}\right),
\enspace
y_i(t) \coloneqq \phi_i\!\left(\begin{bmatrix}1 & 0 \\ t & 1\end{bmatrix}\right),
\enspace
\dot{s}_i \coloneqq \phi_i\!\left(\begin{bmatrix}0 & -1 \\ 1 & 0\end{bmatrix}\right).
\end{align}
We require our choice of $\phi_i$ to be defined over $\RR$ and to satisfy $x_i(\CC) = U_{\alpha_i}$ and $y_i(\CC) = U_{-\alpha_i}$. The data $(T,B^+,B^-,\{x_i \mid i\in I\},\{y_i \mid i\in I\})$ is called a \newword{pinning} for $G$.

We let $W \coloneqq N(T)/T$ denote the \newword{Weyl group} of $G$. Let $s_i \in W$ denote the element of $W$ represented by $\dot s_i$, so that $W$ is a finite Coxeter group with simple generators $\{s_i\mid i\in I\}$ (see \cite{bjorner_brenti05} for details). We let $e$ denote the identity element of $W$. A \newword{reduced expression} for $w\in W$ is a minimal-length expression $w = s_{i_1} \cdots s_{i_l}$ as a product of simple generators. We define $\dot{w} \coloneqq \dot{s}_{i_1} \cdots \dot{s}_{i_l}$, which is a lift of $w$ in $N(T)$ which does not depend on the choice of reduced expression for $w$. We let $w_0$ denote the longest element of $W$. We let $\tau_I$ denote the standard involution on $I$ (often denoted $\cdot^*$ in the literature), which is defined by
\[
w_0(\alpha_i) = -\alpha_{\tau_I(i)} \enspace \text{ for all } i\in I.
\]

Similarly, for $J\subseteq I$, we let $W_J$ denote the subgroup of $W$ generated by $s_j$ for all $j\in J$, and we denote its longest element by $w_{0,J}$. Then we let $\tau_J$ be the involution of $J$ defined by $w_{0,J}(\alpha_i) = - \alpha_{\tau_J(i)}$ for $i\in J$.

\begin{example}\label{eg:pinning}
We recall the standard pinning for $G = \SL_n(\CC)$, the group of invertible $n\times n$ matrices with determinant $1$. We let $B^+$ be the subgroup of upper-triangular matrices and $B^-$ be the subgroup of lower-triangular matrices, so that $T = B^+ \cap B^-$ is the subgroup of diagonal matrices. We have $I = [n-1] \coloneqq \{1, \dots, n-1\}$, and for $i\in I$ the map $\phi_i$ embeds $\SL_2(\CC)$ in rows and columns $\{i,i+1\}$. For example,
\[
x_i(t) = \kbordermatrix{& & i & i+1 & \cr
& \ddots & & & \cr
i & & 1 & t & \cr
i+1 & & 0 & 1 & \cr
& & & & \ddots} \enspace \text{ and } \enspace y_i(t) = \kbordermatrix{& & i & i+1 & \cr
& \ddots & & & \cr
i & & 1 & 0 & \cr
i+1 & & t & 1 & \cr
& & & & \ddots},
\]
where the matrices agree with the identity matrix $I_n$ in all unspecified entries. The Weyl group $W$ is the symmetric group $\symgp{n}$ of all permutations of $[n]$. For example, if $w = 312\in\symgp{3}$ then
\[
\dot{w} = \begin{bmatrix}
0 & -1 & 0 \\
0 & 0 & -1 \\
1 & 0 & 0
\end{bmatrix} \in\SL_3(\CC).
\]
The permutation $w_0$ is the involution sending $i$ to $n+1-i$ for all $i\in [n]$. For all $i\in [n-1]$, the simple generator $s_i$ is the transposition swapping $i$ and $i+1$, and $\tau_I(i) = n-i$.
\end{example}

A \newword{parabolic subgroup} is a closed and connected subgroup $P$ of $G$ that contains a Borel subgroup. We write $U_P$ for the unipotent radical of $P$. The group $L\coloneqq P/U_P$ is called the \newword{Levi factor} of $P$, which is also a reductive group.

For $J\subseteq I$, let $P_J^+$ be the standard parabolic subgroup generated by $B^+$ and $U_{-\alpha_i}$ for all $i\in J$. Similarly, the opposite parabolic subgroup $P_J^-$ is the parabolic subgroup generated by $B^-$ and $U_{\alpha_i}$ for all $i\in J$. In particular, $P_\emptyset^\pm = B^\pm$, and $P^\pm_I = G$.

Let $L_J \coloneqq P_J^+\cap P_J^-$ be the standard Levi subgroup. We have the Levi decomposition $P_J^\pm = L_J\ltimes U_{P_J^\pm}$, and thus the group isomorphism $L_J\simeq P_J^+/U_{P_J^+}$. We let $\Phi_J$ denote the root system of $L_J$; also note that the Weyl group of $L_J$ is $W_J$.

\subsection{Types and cotypes}\label{sec:types}
For every parabolic subgroup $P\subseteq G$, there exist $g\in G$ and a unique $J\subseteq I$ such that $g\cdot P = P_J^+$. We call $J$ the \newword{type} of $P$, denoted $\type_G(P)$. Similarly, we define the \newword{cotype} of $P$ to be the unique $J'\subseteq I$ such that $P$ is conjugate to $P_{J'}^-$, denoted $\cotype_G(P)$.

\begin{remark}\label{cotype_motivation}
Since $\dot{w}_0\cdot P_J^+ = P_{\tau_I(J)}^-$ we have
\begin{align}\label{type_cotype}
\cotype_G(P) = \tau_I(\type_G(P)),
\end{align}
i.e., the involution $\tau_I$ exchanges types and cotypes. Therefore one can work exclusively with types and avoid cotypes by using \eqref{type_cotype}. However, we prefer to work with both types and cotypes for two reasons:
\begin{itemize}[itemsep=2pt]
\item Certain results, such as \eqref{restriction_cotype_formula}, are particularly elegant and easier to understand when stated using both types and cotypes.
\item Much of the setup of this section generalizes to Kac--Moody groups, where cotypes are well-defined but we no longer have a formula like \eqref{type_cotype} relating types and cotypes. (To avoid making this paper overly technical we restrict ourselves to the case of reductive groups, and leave open the study of flagtopes for Kac--Moody groups.)
\end{itemize}
\end{remark}

Given $J\subseteq I$, let $\mathcal P^J$ be the space of all parabolic subgroups of type $J$, and let $\mathcal Q^J$ be the space of all parabolic subgroups of cotype $J$. Since the normalizer of $P_J^+$ in $G$ is $P_J^+$ itself, we have the isomorphism
\[
\mathcal P^J\xrightarrow[]{\sim}G/P_J^+, \enspace g\cdot P_J^+\mapsto gP_J^+/P_J^+.
\]
Since $P_J^+$ is a parabolic subgroup, $\mathcal P^J$ is endowed with the structure of a projective homogeneous $G$-variety, and it is called a \newword{(partial) flag variety} of $G$.
Similarly, there is an isomorphism
\[
\mathcal Q^J \xrightarrow\sim G/P_J^-, \enspace g\cdot P_J^- \mapsto gP_J^-/P_J^-.
\]
\begin{remark}\label{perp_remark}
Let $\transpose{\cdot}$ denote the involutive anti-automorphism of $G$ which fixes every element of $T$ and takes $x_i(t)$ to $y_i(t)$ for all $i\in I$. We can check that the map
\begin{align}\label{perp_formula}
\cdot^\perp : \mathcal{P}^J \to \mathcal{Q}^J, \enspace g\cdot P_J^+ \mapsto \transpose{(g^{-1})}\cdot P_J^-
\end{align}
is well-defined and an isomorphism ($\cdot^\perp$ was defined in \cite[(2.4)]{barkley_karp} for the special case of Borel subgroups). This naturally associates a parabolic subgroup of type $J$ to one of cotype $J$ in such a way that $P_J^+$ corresponds to $P_J^-$. Moreover, $\cdot^\perp$ takes a totally positive parabolic subgroup to a totally negative parabolic subgroup (these notions will be defined in \cref{sec:total_positivity}).
\end{remark}

For a parabolic subgroup $P$, its Levi factor $L$ is also a reductive group. A parabolic subgroup of $L$ itself has a type and cotype, which we define via the following lemma:
\begin{lemma}\label{type_L}
Let $P\in\cP^J$ with Levi factor $L = P/U_P$, and let $Q$ be a parabolic subgroup of $L$. Then there exists a unique parabolic subgroup $Q'$ of $G$ such that $U_P \subseteq Q'\subseteq P$ and $Q'/U_P = Q$. Moreover, $\type_G(Q') \subseteq J$ only depends on the $L$-conjugacy class of $Q$.
\end{lemma}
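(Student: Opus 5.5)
The natural choice is $Q' \coloneqq \pi^{-1}(Q)$, where $\pi : P \to L = P/U_P$ is the quotient map. Then $Q'$ is a closed subgroup of $P$ with $U_P\subseteq Q'$ and $Q'/U_P = Q$. Uniqueness is immediate: any subgroup $Q''$ with $U_P\subseteq Q''\subseteq P$ and $Q''/U_P = Q$ must equal $\pi^{-1}(\pi(Q'')) = \pi^{-1}(Q)$. Connectedness of $Q'$ follows because $U_P$ is connected (it is unipotent) and $Q$ is connected, so $Q'$ is an extension of a connected group by a connected group. The remaining point for existence is that $Q'$ contains a Borel subgroup of $G$. Since $Q$ is parabolic in $L$, it contains a Borel subgroup $B_L$ of $L$. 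I will show that $\pi^{-1}(B_L)$ is a Borel subgroup of $G$.

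To see this, conjugate so that $P = P_J^+$, which identifies $L$ with $L_J$ via the Levi decomposition $P_J^+ = L_J\ltimes U_{P_J^+}$. Borel subgroups of $L_J$ are $L_J$-conjugate to $B^+\cap L_J$. Then $\pi^{-1}(B^+\cap L_J) = (B^+\cap L_J)\ltimes U_{P_J^+} = B^+$, and conjugating by an element of $L_J\subseteq P$ shows that $\pi^{-1}(B_L)$ is a conjugate of $B^+$. Hence $Q'$ is parabolic.

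For the type statement, first replace $Q$ by an $L$-conjugate. If $Q_2 = \ell\cdot Q_1$ with $\ell\in L$, lift $\ell$ to some $p\in P$ with $\pi(p)=\ell$. Then $\pi^{-1}(Q_2) = p\cdot\pi^{-1}(Q_1)$, because $U_P$ is normal in $P$. So the two preimages are $G$-conjugate and have the same type. It therefore suffices to compute the type for a single representative of each conjugacy class. Again take $P = P_J^+$ and $L = L_J$. Every parabolic subgroup of $L_J$ is $L_J$-conjugate to a unique standard parabolic $P_K^+\cap L_J$ with $K\subseteq J$, the one generated by $B^+\cap L_J$ and $U_{-\alpha_i}$ for $i\in K$. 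Its preimage is generated by $B^+$ and these $U_{-\alpha_i}$, namely $P_K^+$, which has type $K\subseteq J$.

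Finally, I need to remove the assumption $P = P_J^+$. Choose $g$ with $g\cdot P = P_J^+$. Conjugation by $g$ carries $U_P$ to $U_{P_J^+}$ and induces an isomorphism $L\to L_J$. The construction $Q\mapsto Q'$ commutes with this conjugation, and $G$-conjugation does not change type. The one subtlety is that the identification $L\simeq L_J$ depends on the choice of $g$, up to an element of $P_J^+$, whose induced automorphism of $L_J$ is inner. This is harmless, since the type has already been shown to depend only on the conjugacy class. The main obstacle I expect is justifying the standard structural facts, namely the Levi decomposition and the classification of parabolics of $L_J$ containing $B^+\cap L_J$. I will cite these from \cite{borel_tits65,humphreys75} rather than reprove them.
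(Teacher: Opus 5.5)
Your proposal is correct and follows the paper's route. You take $Q'=\pi^{-1}(Q)$, obtain parabolicity from the fact that $\pi^{-1}(B_L)$ is a Borel subgroup of $G$, and lift an $L$-conjugating element to $P$ to show the type depends only on the $L$-conjugacy class; the only difference is that you verify the Borel claim and $\type_G(Q')\subseteq J$ explicitly by reducing to $P=P_J^+$ and the standard parabolics $P_K^+$, whereas the paper asserts the former and deduces the latter from $Q'\subseteq P$.
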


\begin{proof}
Let $\pi:P\to L$ be the natural projection, and consider $Q' = \pi^{-1}(Q)$. We have $U_P\subseteq Q'\subseteq P$, and $Q'$ is unique. Let $B_L$ be a Borel subgroup of $L$ that is contained in $Q$, so that $\pi^{-1}(B_L)$ is a Borel subgroup of $G$ and is contained in $Q'$. Therefore $Q'$ is a parabolic subgroup of $G$. Since $Q'\subseteq P$, we have $\type_G(Q')\subseteq J$.

Now suppose that $Q_1$ and $Q_2$ are $L$-conjugate parabolic subgroups of $L$. By definition, there exists $p\in P$ such that \[
p\pi^{-1}(Q_1)p^{-1}U_P/U_P = \pi^{-1}(Q_2)U_P/U_P.
\]
We conclude that $p\pi^{-1}(Q_1)p^{-1} = \pi^{-1}(Q_2)$. Therefore, $\type_G(Q')$ only depends on the $L$-conjugacy class of $Q$.
\end{proof}

Given $P\in\mathcal{P}^J$ with Levi factor $L$ and a parabolic subgroup $Q$ of $L$, we define
\[
\type_L(Q) \coloneqq \type_G(Q') \enspace \text {for $Q'$ as in \cref{type_L}}.
\]
Equivalently, we can define type and cotype in $L$ by assuming that $P = P_J^+$ and taking the standard isomorphism $L \simeq L_J$; we then view the simple reflections of $L$ as a subset of the simple reflections of $G$ using $L\simeq L_J \subseteq G$. Then by \eqref{type_cotype}, we have
\[
\cotype_L(Q) = \tau_J(\type_L(Q)) \subseteq J.
\]

\begin{remark}\label{cotype_asymmetry}
    Every reductive group $G$ has an `intrinsic' notion of type/cotype, which is a subset of the indexing set of all conjugacy classes of maximal parabolic subgroups of $G$. What we have implicitly done to define $\type_L$ is to associate every conjugacy class for $L$ to a conjugacy class for $G$. This association, however, is not intrinsic, and in particular it breaks a symmetry between types and cotypes. For example, $\cotype_L(Q)$ is generally \emph{not} equal to $\cotype_G(Q')$ for $Q'$ as in \cref{type_L}. (Indeed, $\cotype_G(Q')$ is a subset of $\tau_I(J)$, but it is not necessarily contained in $J$.)
\end{remark}

\subsection{Transversality for parabolic subgroups}
Let $J,J'\subseteq I$, and consider the $G$-action on $\mathcal P^J\times\mathcal Q^{J'}$ via
\[
g\cdot (P,P') \coloneqq (g\cdot P,g\cdot P').
\]
The $G$-orbits are indexed by the double quotient $W_J\backslash W/W_{J'}$, where $w\in W_J\backslash W/W_{J'}$ labels the $G$-orbit through $(P_J^+,\dot{w}\cdot P_{J'}^-)$. There is a unique open orbit, denoted by $\mathcal O_{J,J'}$, which is indexed by $e\in W_J\backslash W/W_{J'}$. We say that parabolic subgroups $P$ and $P'$ of $G$ are \newword{transverse} if $(P,P')\in\mathcal{O}_{J,J'}$ (where $J = \type_G(P)$ and $J' = \cotype_G(P')$), i.e., there exists $g\in G$ such that $g\cdot P = P_J^+$ and $g\cdot P' = P_{J'}^-$. (Transversality is a symmetric relation.) We have the following equivalent characterizations of transversality:
\begin{proposition}[{\cite[Proposition 4.3]{barkley_karp}}]\label{prop:transverseparabolics}
Let $P$ and $P'$ be parabolic subgroups of $G$. The following are equivalent:
\begin{enumerate}[label=(\roman*), leftmargin=*, itemsep=2pt]
\item\label{prop:transverseparabolics_transverse} $P$ and $P'$ are transverse;
\item\label{prop:transverseparabolics_borels} there exist Borel subgroups $B, B' \subseteq G$ such that $B\subseteq P$, $B'\subseteq P'$, and $B\cap B'$ is a maximal torus of $G$;
\item\label{prop:transverseparabolics_roots2} every root $\alpha\in\Phi$ (with respect to a fixed maximal torus in $P\cap P'$) is a root of $P$ or $P'$; 
\item\label{prop:transverseparabolics_dimension} $\dim(P\cap P') = \dim(P) + \dim(P') - \dim(G)$.\hfill\qed
\end{enumerate}
\end{proposition}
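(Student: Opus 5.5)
Since all four conditions are invariant under simultaneous conjugation by $G$, I will reduce to a normal form and prove the cycle (i)$\Rightarrow$(ii)$\Rightarrow$(iii)$\Rightarrow$(iv)$\Rightarrow$(i). Throughout, let $J=\type_G(P)$ and $J'=\cotype_G(P')$. The basic tool is the Bruhat-type decomposition recalled above: the $G$-orbits on $\cP^J\times\cQ^{J'}$ are the orbits of $(P_J^+,\dot w\cdot P_{J'}^-)$ for $w\in W_J\backslash W/W_{J'}$. This lets me always conjugate so that $P=P_J^+$ and $P'=\dot w\cdot P_{J'}^-$, with $T\subseteq P\cap P'$.

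(i)$\Rightarrow$(ii): after conjugating, $P=P_J^+\supseteq B^+$ and $P'=P_{J'}^-\supseteq B^-$, and $B^+\cap B^-=T$, so (ii) holds.

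(ii)$\Rightarrow$(iii): $B\cap B'$ is a maximal torus $S$, and $B$ and $B'$ are opposite Borels with respect to $S$, so every root relative to $S$ lies in $B$ or in $B'$, hence in $P$ or $P'$. The statement uses an arbitrary fixed maximal torus $S'\subseteq P\cap P'$, so I need the condition to be independent of the torus chosen. Maximal tori of the connected group $P\cap P'$ are conjugate within it, since $P\cap P'$ contains a maximal torus and is connected (an intersection of two parabolics containing a common maximal torus is connected). Conjugating by an element of $P\cap P'$ preserves $P$ and $P'$ and carries the root data of one torus to the other.

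(iii)$\Rightarrow$(iv): for $S\subseteq P\cap P'$, the Lie algebras decompose as $\mathfrak t\oplus\bigoplus_{\alpha\in R}\mathfrak g_\alpha$, where $R=\Phi(P)$ or $\Phi(P')$. The Lie algebra of $P\cap P'$ is $\mathfrak t\oplus\bigoplus_{\Phi(P)\cap\Phi(P')}\mathfrak g_\alpha$. Inclusion--exclusion then gives $\dim P+\dim P'-\dim(P\cap P')=\dim T+|\Phi(P)\cup\Phi(P')|$, and this equals $\dim G$ exactly when $\Phi(P)\cup\Phi(P')=\Phi$. Here I use that $P\cap P'$ is smooth, which holds in characteristic zero. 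The same computation shows that (iv)$\Leftrightarrow$(iii).

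(iii)$\Rightarrow$(i) is the main step. Conjugate to $P=P_J^+$ and $P'=\dot w\cdot P_{J'}^-$ with $w$ the minimal-length representative of its double coset. Then $\Phi(P)=\Phi^+\cup\Phi_J$ and $\Phi(P')=w(\Phi^-\cup\Phi_{J'})$. By the independence-of-torus point above, (iii) may be checked with respect to $T$. Suppose $w\neq e$. Minimality of $w$ in $W_JwW_{J'}$ gives two facts. First, there is a simple root $\alpha_i$ with $i\notin J$ such that $w^{-1}(\alpha_i)<0$, since $w$ is minimal in $W_Jw$. Second, $w^{-1}(\alpha_i)\notin\Phi_{J'}$, since $w$ is minimal in $wW_{J'}$, so $w^{-1}$ maps no simple root into $-\Phi_{J'}^{+}$ except roots it sends to $\Phi_{J'}$ consistently. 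This choice of $\alpha_i$ with both properties is the delicate combinatorial point. I expect to use the standard fact that $w(\Phi_{J'}^+)\subseteq\Phi^+$ for $w$ minimal in $wW_{J'}$, which gives $w(\Phi_{J'}^{-})\subseteq\Phi^-$ and $w(\Phi^-\setminus\Phi_{J'})$ a set of roots. Then $-\alpha_i$ is a root not lying in $\Phi^+\cup\Phi_J$. It also does not lie in $w(\Phi^-\cup\Phi_{J'})$: $w^{-1}(-\alpha_i)$ is positive and not in $\Phi_{J'}$. This contradicts (iii), so $w=e$, and $(P,P')$ is in the open orbit, i.e.\ (i) holds.
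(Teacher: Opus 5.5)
Your proof is correct. The paper gives no argument of its own here: the proposition is quoted from \cite[Proposition 4.3]{barkley_karp} and stated without proof, so there is no in-paper route to compare with, and your proposal works as a self-contained proof.

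Its logical structure is (i)$\Rightarrow$(ii)$\Rightarrow$(iii)$\Leftrightarrow$(iv) together with (iii)$\Rightarrow$(i). You reduce to $(P_J^+,\dot w\cdot P_{J'}^-)$ with $w$ minimal in $W_JwW_{J'}$. You get (iii)$\Leftrightarrow$(iv) from the root-space decompositions of the Lie algebras, using smoothness of $P\cap P'$ in characteristic zero. When $w\neq e$, you use a left descent $s_i$ of $w$ to exhibit the root $-\alpha_i$, which lies in neither $\Phi^+\cup\Phi_J$ nor $w(\Phi^-\cup\Phi_{J'})$.

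Two small clean-ups are worth making.

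First, the step you flag as delicate is automatic and needs no special choice of $i$. For \emph{any} $i$ with $w^{-1}(\alpha_i)<0$: if $w^{-1}(\alpha_i)\in\Phi_{J'}$, then $w^{-1}(\alpha_i)\in\Phi_{J'}^-$, so $\alpha_i\in w(\Phi_{J'}^-)\subseteq\Phi^-$, which is absurd. This one line should replace the muddled sentence before it.

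Second, independence of the torus does not need connectedness of $P\cap P'$. Maximal tori of any linear algebraic group $H$ lie in $H^\circ$ and are conjugate under $H^\circ$. A conjugating element of $(P\cap P')^\circ$ lies in both $P$ and $P'$, so it normalizes both.

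That independence is genuinely needed. The paper's proof of \cref{transversality_maximal_parabolics} applies (i)$\Rightarrow$(iii) to $Q,Q'$ with respect to the torus $T\subseteq P\cap P'$, not a torus produced by (ii).
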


We can also characterize transversality for arbitrary parabolic subgroups in terms of transversality for maximal parabolic subgroups, as follows (see \cref{type_A_transversality} for this statement in the case of $G = \SL_n(\CC)$). We point out that given a parabolic subgroup $P\subseteq G$, the maximal parabolic subgroups containing $P$ are indexed by the subsets of $I$ of size $|I|-1$ containing $\type_G(P)$; this follows from \cite[Section 23.1]{humphreys75}.
\begin{lemma}\label{transversality_maximal_parabolics}
Let $P$ and $P'$ be parabolic subgroups of $G$. Then $P$ and $P'$ are transverse if and only if $Q$ and $Q'$ are transverse for all maximal parabolic subgroups $Q\supseteq P$ and $Q'\supseteq P'$.
\end{lemma}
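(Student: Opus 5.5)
The forward direction is immediate. If $P$ and $P'$ are transverse, I would invoke \cref{prop:transverseparabolics}\ref{prop:transverseparabolics_borels} to get Borel subgroups $B\subseteq P$ and $B'\subseteq P'$ with $B\cap B'$ a maximal torus. These same Borels lie in any $Q\supseteq P$ and $Q'\supseteq P'$, so the same criterion shows that $Q$ and $Q'$ are transverse.

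For the converse, the plan is to use criterion \ref{prop:transverseparabolics_roots2} relative to a maximal torus $S\subseteq P\cap P'$. Such a torus exists because any two parabolic subgroups contain a common maximal torus: by the Bruhat decomposition, any two Borel subgroups share a maximal torus. Conjugating by an element of $G$, I may assume $S=T$ and $P'=P_{J'}^-$. Then $P=\dot w\cdot P_K^+$ for some $w\in W$ with $K=\type_G(P)$, since parabolics containing $T$ are of the form $\dot w\cdot P_K^+$. The roots of $P$ are $w(\Phi^+\cup\Phi_K)$, and the roots of $P'$ are $\Phi^-\cup\Phi_{J'}$.

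The key step is to identify the maximal parabolics involved. The maximal parabolics containing $P$ are $\dot w\cdot P^+_{I\setminus\{i\}}$ for $i\notin K$, and those containing $P'$ are $P^-_{I\setminus\{j\}}$ for $j\notin J'$. Their root sets are easy to describe. Write the roots of $P^+_{I\setminus\{i\}}$ as $R^+_i$; this is the set of roots whose $\alpha_i$-coefficient is $\ge 0$. Similarly, the roots of $P^-_{I\setminus\{j\}}$ form $R^-_j$, the set of roots whose $\alpha_j$-coefficient is $\le 0$. The root set of $P$ equals $\bigcap_{i\notin K} wR^+_i$, and the root set of $P'$ equals $\bigcap_{j\notin J'}R^-_j$.

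Now suppose some root $\alpha$ lies in neither $P$ nor $P'$. Then there are indices $i\notin K$ and $j\notin J'$ with $\alpha\notin wR^+_i$ and $\alpha\notin R^-_j$. This means $\alpha$ is a root of neither $Q=\dot w\cdot P^+_{I\setminus\{i\}}$ nor $Q'=P^-_{I\setminus\{j\}}$. Both $Q$ and $Q'$ contain $T$, so criterion \ref{prop:transverseparabolics_roots2}, applied with respect to $T$, shows that $Q$ and $Q'$ are not transverse. This is the contrapositive of the converse.

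The main obstacle I expect is justifying the intersection description of root sets: a parabolic containing $T$ has root set equal to the intersection of the root sets of the maximal parabolics containing it. After conjugating by $w$, this is the statement that $\Phi^+\cup\Phi_K$ consists exactly of the roots whose $\alpha_i$-coefficient is $\ge 0$ for every $i\notin K$. This holds because every root has all coefficients of the same sign: a root with nonnegative $\alpha_i$-coefficients for all $i\notin K$ is either positive or lies in $\Phi_K$. I also need that criterion \ref{prop:transverseparabolics_roots2} may be applied with any maximal torus in the intersection; I take this from the statement of \cref{prop:transverseparabolics}.
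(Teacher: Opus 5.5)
Your proposal is correct and follows the same route as the paper. The forward direction uses criterion (ii), and the converse uses criterion (iii), relying on the fact that a root missing from $P$ (resp.\ $P'$) is missing from some maximal parabolic containing it. The differences are cosmetic: you argue by contrapositive and check explicitly, via signs of root coefficients after conjugating to $P'=P_{J'}^-$, that a parabolic is the intersection of the maximal parabolics containing it, whereas the paper simply cites that fact.
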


\begin{proof}
    The forward direction follows from the equivalence \ref{prop:transverseparabolics_transverse} $\Leftrightarrow$ \ref {prop:transverseparabolics_borels} of \cref{prop:transverseparabolics}. To prove the backward direction, we can assume that $T \subseteq P\cap P'$. By the implication \ref{prop:transverseparabolics_roots2} $\Rightarrow$ \ref{prop:transverseparabolics_transverse} of \cref{prop:transverseparabolics}, it is enough to prove that every root $\alpha \in \Phi$ is a root of $P$ or $P'$.
    
    Assume $\alpha$ is not a root of $P$; we will show that $\alpha$ is a root of $P'$. Since $P'$ is the intersection of the maximal parabolic subgroups containing it, it suffices to show that $\alpha$ is a root of $Q'$ for every maximal parabolic subgroup $Q'\supseteq P'$. Similarly $P$ is the intersection of the maximal parabolic subgroups containing it, so there exists a maximal parabolic subgroup $Q\supseteq P$ such that $\alpha$ is not a root of $Q$. Since $Q$ and $Q'$ are transverse, $\alpha$ is a root of $Q'$ by the implication \ref{prop:transverseparabolics_transverse} $\Rightarrow$ \ref{prop:transverseparabolics_roots2} of \cref{prop:transverseparabolics}, as desired.
\end{proof}

\subsection{\texorpdfstring{$P$}{P}-restriction}
Given parabolic subgroups $P,P'\subseteq G$, we consider $(P\cap P')U_P$, which is a parabolic subgroup of $G$ by work of Borel and Tits \cite[Proposition 4.4(b)]{borel_tits65}. It is denoted $P^{P'}$ by Lusztig \cite{lusztig04}; we also refer to work of Moonen and Wedhorn \cite[Section 3.7]{moonen_wedhorn04}, where it is called the \emph{refinement} of $P$ with respect to $P'$.

\begin{lemma}\label{restriction_cotype}
Let $P$ and $P'$ be transverse parabolic subgroups of $G$, and let $L=P/U_P$ be the Levi factor of $P$. Then $(P\cap P')U_P/U_P$ is a parabolic subgroup of $L$, and
\begin{align}\label{restriction_cotype_formula}
\cotype_L((P\cap P')U_P/U_P) = \type_G(P) \cap \cotype_G(P').
\end{align}
\end{lemma}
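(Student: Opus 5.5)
Since $P$ and $P'$ are transverse, I will first reduce to the standard position. By definition of transversality there exists $g\in G$ with $g\cdot P = P_J^+$ and $g\cdot P' = P_{J'}^-$, where $J = \type_G(P)$ and $J'=\cotype_G(P')$. Conjugation by $g$ carries $U_P$ to $U_{P_J^+}$ and induces an isomorphism $L \simeq P_J^+/U_{P_J^+}$. By \cref{type_L}, types in $L$ are read off by pulling back to $G$, and these are invariant under $G$-conjugation; cotypes in $L$ are then determined by $\cotype_L = \tau_J\circ \type_L$. Hence both sides of \eqref{restriction_cotype_formula} are unchanged, and I may assume $P = P_J^+$ and $P' = P_{J'}^-$.

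In this standard position, $P\cap P'$ contains $T$, and its roots are $\Phi(P_J^+)\cap\Phi(P_{J'}^-) = (\Phi^+\cup\Phi_J)\cap(\Phi^-\cup\Phi_{J'})$. I will then compute $(P\cap P')U_P$: its roots are those of $P\cap P'$ together with $\Phi^+\setminus\Phi_J$, which gives
\[
(\Phi^+\setminus \Phi_J)\cup \bigl(\Phi_J\cap(\Phi^-\cup\Phi_{J'})\bigr).
\]
Note that $\Phi_J\cap(\Phi^-\cup\Phi_{J'}) = \Phi_J^-\cup\Phi_{J\cap J'}^+$, because a positive root lies in both $\Phi_J$ and $\Phi_{J'}$ exactly when its support is contained in $J\cap J'$. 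Passing to the quotient by $U_P$ and using the identification $L\simeq L_J$, the image $(P\cap P')U_P/U_P$ becomes the subgroup of $L_J$ generated by $T$ and the root subgroups for $\Phi_J^-\cup\Phi^+_{J\cap J'}$. That is exactly the opposite standard parabolic of $L_J$ associated to $J\cap J'$, namely $P^-_{J\cap J'}\cap L_J$. This shows it is a parabolic subgroup of $L$, since it contains the Borel subgroup $B^-\cap L_J$ of $L_J$.

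The remaining step is to identify its cotype in $L$ as $J\cap J'$, and this is where I expect the main care to be needed, because of the asymmetry described in \cref{cotype_asymmetry}. I will use the convention that cotype in $L\simeq L_J$ is measured with respect to the pinning restricted to $L_J$, so that the opposite standard parabolic of $L_J$ indexed by $K\subseteq J$ has cotype $K$. To check that this agrees with the definition through $\type_L$, I will note that $P^-_{K}\cap L_J$ is conjugate in $L_J$, via $\dot w_{0,J}$, to $P^+_{\tau_J(K)}\cap L_J$. Its preimage in $P_J^+$ is $P^+_{\tau_J(K)}$, so by definition $\type_L = \tau_J(K)$, and therefore $\cotype_L = \tau_J(\tau_J(K)) = K$. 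Taking $K = J\cap J'$ gives \eqref{restriction_cotype_formula}. Finally, I will verify the root computation with the group itself rather than only with root sets. This follows from the fact that closed connected subgroups containing $T$ are determined by their sets of roots, together with the product decomposition of $P\cap P'$ due to Borel--Tits.
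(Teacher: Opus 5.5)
Your proposal is correct and follows the same route as the paper: conjugate to $P=P_J^+$, $P'=P_{J'}^-$, observe that the roots of $(P\cap P')U_P$ lying in $\Phi_J$ are exactly $\Phi_J^-\cup\Phi^+_{J\cap J'}$, and recognize the image in $L\simeq L_J$ as the opposite standard parabolic of $L_J$ indexed by $J\cap J'$ (the paper does the conjugation step at the end rather than at the start). Your explicit check via $\dot{w}_{0,J}$ that this parabolic has $\cotype_L=\tau_J(\type_L)=J\cap J'$ under the definition through \cref{type_L}, and your use of connectedness when identifying subgroups by their roots, make explicit two points that the paper's proof leaves implicit.
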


\begin{proof}
Let $J = \type_G(P)$ and $J' = \cotype_G(P')$. We first assume that $P = P_J^+$ and $P' = P_{J'}^-$. In this case, since $(P_J^+\cap P_{J'}^-)U_{P_J^+}$ is closed and contains $T$, it is determined by which root subgroups it contains. The roots which appear are exactly
\[ \Phi_J^- \cup \Phi_{J'}^+ \cup (\Phi^+\setminus \Phi_J^+). \]
Hence the roots contained in $\Phi_J$ are exactly 
\[ \Phi_J^- \cup \Phi_{J\cap J'}^+. \]
It follows that 
\[ (P_{J}^+ \cap P_{J'}^-)U_{P_J^+}/U_{P_J^+}  \in \cQ^{J\cap J'}(L). \]

In general, there exists $g\in G$ such that $P = g\cdot P_J^+$ and $P' = g\cdot P_{J'}^-$, and
\[
(P\cap P')U_P/U_P = (g\cdot(P_J^+\cap P_{J'}^-)U_{P_J^+})/(g\cdot U_{P^+_J}) \in\mathcal{Q}^{J\cap J'}(L).
\]
This completes the proof.
\end{proof}

We can restate \eqref{restriction_cotype_formula} purely in terms of types, as follows:
\begin{corollary}\label{restriction_type}
Let $P$ and $P'$ be transverse parabolic subgroups of types $J$ and $\tilde{J}$, respectively, and let $L = P/U_P$ be the Levi factor of $P$. Then
\begin{align}\label{type_formula}
\type_L((P\cap P')U_P/U_P) =  \tau_J(J\cap \tau_I(\tilde{J})).
\end{align}
\end{corollary}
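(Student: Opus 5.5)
The corollary follows by translating \cref{restriction_cotype} from cotypes to types, using the relation between types and cotypes in $G$ and in $L$. The plan is to apply \eqref{type_cotype} twice: once in $G$, to convert $\tilde{J}$ into the cotype of $P'$, and once in $L$, to convert the cotype given by \eqref{restriction_cotype_formula} back into a type.

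First, $P'$ has type $\tilde{J}$, so by \eqref{type_cotype} in $G$ we have $\cotype_G(P') = \tau_I(\tilde{J})$. Applying \cref{restriction_cotype} (whose hypotheses hold since $P$ and $P'$ are transverse), the parabolic subgroup $Q \coloneqq (P\cap P')U_P/U_P$ of $L$ satisfies
\[
\cotype_L(Q) = J \cap \tau_I(\tilde{J}).
\]

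Second, we use the identity $\cotype_L(Q) = \tau_J(\type_L(Q))$ recorded after \cref{type_L}. It follows by identifying $L$ with $L_J$ (taking $P = P_J^+$ up to conjugation) and applying \eqref{type_cotype} to the reductive group $L_J$, whose simple roots are indexed by $J$ and whose longest Weyl group element is $w_{0,J}$. Since $\tau_J$ is an involution on $J$ and $\cotype_L(Q) \subseteq J$, applying $\tau_J$ to both sides gives
\[
\type_L(Q) = \tau_J(\cotype_L(Q)) = \tau_J(J\cap \tau_I(\tilde{J})),
\]
which is \eqref{type_formula}.

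The only point needing care is that the identity $\cotype_L = \tau_J\circ\type_L$ is compatible with the conventions defining $\type_L$ and $\cotype_L$ via \cref{type_L}, given the asymmetry noted in \cref{cotype_asymmetry}. I expect this to be a routine check rather than a real obstacle, since the paper states this identity explicitly. If one wants it self-contained, it can be verified directly: inside $L_J$, the element $\dot{w}_{0,J}$ conjugates $P_K^+\cap L_J$ to $P_{\tau_J(K)}^-\cap L_J$ for $K\subseteq J$.
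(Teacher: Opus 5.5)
Your proposal is correct and follows the paper's own proof. The paper likewise combines \eqref{restriction_cotype_formula} with two applications of \eqref{type_cotype}: once in $G$ to get $\cotype_G(P')=\tau_I(\tilde{J})$, and once in $L$ via $\cotype_L=\tau_J\circ\type_L$, inverting with the fact that $\tau_J$ is an involution of $J$. Your optional check that $\dot{w}_{0,J}$ conjugates $P_K^+\cap L_J$ to $P_{\tau_J(K)}^-\cap L_J$ is a valid way to make that second step self-contained.
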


\begin{proof}
This follows from \eqref{restriction_cotype_formula} by applying \eqref{type_cotype} twice, using the fact that $\tau_J(J) = J$.
\end{proof}

Fix $J,J'\subseteq I$ and $P\in\mathcal{P}^J$. We let
\[
\mathcal O_{J,J'}^P \coloneqq \{P'\in\mathcal{Q}^{J'} \mid (P,P')\in\mathcal O_{J,J'}\}
\]
denote the set of parabolic subgroups of cotype $J'$ which are transverse to $P$.
\begin{definition}
Let $J,J'\subseteq I$, $P\in\mathcal{P}^J$, and $L = P/U_P$. We define \newword{$P$-restriction} to be the map
\begin{align}\label{restriction_definition}
\rho_P:\mathcal O_{J,J'}^P\to\mathcal{Q}^{J\cap J'}(L),\enspace P'\mapsto (P\cap P')U_P/U_P,
\end{align}
which is well-defined by \cref{restriction_cotype}. (We note that $\rho_P$ depends on $J'$ in addition to $P$, which we suppress from the notation.)
\end{definition}

Recall that $\mathcal O_{J,J'}$ is a $G$-orbit of $\cP^J \times \cQ^{J'}$. Since the normalizer of $P$ in $G$ is $P$ itself, we see that $\mathcal O_{J,J'}^P$ carries a transitive $P$-action via conjugation. The flag variety $\mathcal{Q}^{J\cap J'}(L)$ is a homogeneous $L$-space on which $P$ also acts transitively via the natural projection $P\twoheadrightarrow L$. The map $\rho_P$ is then $P$-equivariant, and therefore a regular morphism between algebraic varieties.

When $J=I$ or $J \subseteq J'$, the $P$-restriction map becomes easy to describe:
\begin{example}\label{restriction_examples}
Let $J,J'\subseteq I$, and let $K = J\cap J'$ denote the cotype of the $P$-restriction image.
\begin{enumerate}[label=(\arabic*), leftmargin=36pt, itemsep=2pt]
    \item\label{restriction_examples_identity} Suppose that $J = I$. Then $P = L = G$, $\mathcal O_{J,J'}^P = \mathcal Q^{J'}$, and $K = J'$. The $P$-restriction $\rho_P$ is the identity morphism of $\mathcal Q^{J'}$.

    \item\label{restriction_examples_singleton} Suppose that $J\subseteq J'$. Then $K = J$ and the codomain $\cQ^K(L) = \{L\}$ is a singleton.\qedhere
\end{enumerate}
\end{example}

\begin{remark}\label{restriction_other_relative_positions}
One can generalize $P$-restriction by replacing $\mathcal{O}_{J,J'}$ by an arbitrary $G$-orbit of $\mathcal{P}^J\times\mathcal{Q}^{J'}$, which is indexed by a double coset $w\in W_J\backslash W/W_{J'}$. The cotype of the image (as a parabolic subgroup of $L$) will then depend on $w$. For simplicity we only consider $P$-restriction on the dense orbit $\mathcal{O}_{J,J'}$.
\end{remark}

\subsection{Total positivity}\label{sec:total_positivity}
Following \cite{lusztig94}, we introduce the \newword{totally nonnegative} and \newword{totally positive parts} of various spaces introduced above, which we denote by adding `$\ge\!0$' and `$>\!0$', respectively, as a superscript or subscript. Let $U^+_{\ge0}$ be the submonoid of $G$ generated by $x_i(t)$ for all $i\in I$ and $t\in\mathbb R_{>0}$, and $U^-_{\ge0}$ be the submonoid of $G$ generated by $y_i(t)$ for all $i\in I$ and $t\in\mathbb R_{>0}$. Let $T_{>0}$ be the identity component of $T(\mathbb R)$, and let $G_{\ge0}$ be the submonoid of $G$ generated by $U^\pm_{\ge 0}$ and $T_{>0}$.

We define
\[
\begin{aligned}
U^+_{>0}
&\coloneqq\{x_{i_1}(t_1)\cdots x_{i_l}(t_l) \mid t_1,\ldots,t_l>0\},\\
U^-_{>0}
&\coloneqq\{y_{i_1}(t_1)\cdots y_{i_l}(t_l) \mid t_1,\ldots,t_l>0\}
\end{aligned}
\]
for any reduced expression $w_0 = s_{i_1} \cdots s_{i_l}$, and we set
\[
G_{>0} \coloneqq U^-_{>0}T_{>0}U^+_{>0} = U^+_{>0}T_{>0}U^-_{>0}.
\]

\begin{example}
When $G = \SL_n(\CC)$ with the pinning as in \cref{eg:pinning}, the monoid $G_{\geq0}$ (respectively, $G_{>0}$) consists of all $n\times n$ matrices of determinant $1$ whose minors are all nonnegative (respectively, positive).
\end{example}

We set $\mathcal P^J_{>0} \coloneqq \{g\cdot P_J^+ \mid g\in U^-_{>0}\}$, and define $\mathcal P^J_{\geq0}$ to be the Hausdorff closure of $\mathcal P^J_{>0}$ in $\mathcal P^J(\mathbb R)$. We also set $\cQ^J_{>0} \coloneqq \cP^{\tau_I(J)}_{>0}$ (which equals $\{g\cdot P_J^- \mid g\in U^+_{>0}\}$) and $\cQ^J_{\ge 0} \coloneqq \cP^{\tau_I(J)}_{\geq0}$.

Following \cite{lusztig}, we similarly define the \newword{totally negative} and \newword{totally nonpositive parts} by replacing $\RR_{>0}$ with $\RR_{<0}$ above, i.e., by replacing the pinning of $G$ with the one obtained by inverting $x_i$ and $y_i$ for all $i\in I$. For example, $U^-_{<0} = (U^-_{>0})^{-1}$ and $\mathcal P_{<0}^J = \{g^{-1}\cdot P_J^+ \mid g\in U^-_{>0}\}$.

\begin{lemma}\label{negative_nonnegative_transverse}
Let $P\in\mathcal P^J_{<0}$ and $P'\in\cQ^{J'}_{\geq0}$. Then $P$ and $P'$ are transverse.
\end{lemma}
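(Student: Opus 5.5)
The plan is to reduce to Borel subgroups, then show that a totally negative Borel and a totally nonnegative opposite Borel are in opposite position. The key input is the Gauss decomposition $G_{>0} = U^+_{>0}T_{>0}U^-_{>0} \subseteq B^+B^-$.

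\textbf{Step 1 (reduction to Borels).} By the implication \ref{prop:transverseparabolics_borels} $\Rightarrow$ \ref{prop:transverseparabolics_transverse} of \cref{prop:transverseparabolics}, it suffices to find Borel subgroups $B\subseteq P$ and $B'\subseteq P'$ with $B\cap B'$ a maximal torus. Equivalently, it suffices to find $k_1,k_2\in G$ with $k_1\cdot B^+\subseteq P$, $k_2\cdot B^-\subseteq P'$, and $k_1^{-1}k_2\in B^+B^-$. Indeed, writing $k_1^{-1}k_2 = b^+b^-$, we get $k_1b^+\cdot B^+ = k_1\cdot B^+$ and $k_1b^+\cdot B^- = k_2\cdot B^-$, so the two Borels meet in the torus $k_1b^+\cdot T$.

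\textbf{Step 2 (choice of Borels).} Write $P = g^{-1}\cdot P_J^+$ with $g\in U^-_{>0}$. For any $x\in U^+_{>0}$ we have $(g^{-1}x^{-1})\cdot B^+ = g^{-1}\cdot B^+\subseteq P$, so we take $k_1 = g^{-1}x^{-1}$.

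For $P'$ we need a representative $h\in G_{\ge 0}$ with $P' = h\cdot P_{J'}^-$. I would invoke Lusztig's description of the totally nonnegative partial flag variety: it is the image of the totally nonnegative full flag variety, and the latter is the $G_{\ge0}$-orbit of the base point. Transported to the opposite convention $\cQ^{J'}_{\ge0} = \cP^{\tau_I(J')}_{\ge0}$ via $\dot w_0$, this gives $\cQ^{J'}_{\ge 0} = \{h\cdot P_{J'}^- : h\in G_{\ge0}\}$. Then for any $y\in U^-_{>0}$ we have $hy\cdot B^-\subseteq hy\cdot P_{J'}^- = P'$, so we take $k_2 = hy$.

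\textbf{Step 3 (positivity computation).} We have $k_1^{-1}k_2 = x\,g\,h\,y$. Since $x\in U^+_{>0}$ and $g\in U^-_{>0}$, the product $xg$ lies in $U^+_{>0}U^-_{>0}\subseteq G_{>0}$. Using Lusztig's facts $G_{>0}G_{\ge0}\subseteq G_{>0}$ and $G_{>0}U^-_{>0}\subseteq G_{>0}$, we get $xghy\in G_{>0}$. Finally $G_{>0} = U^+_{>0}T_{>0}U^-_{>0}\subseteq B^+B^-$, which completes Step 1's criterion.

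\textbf{Main obstacle.} Step 2 is where I expect the real work. The definition only gives $\cQ^{J'}_{\ge0}$ as a Hausdorff closure, so one must justify the representative $h\in G_{\ge0}$. This requires correctly translating Lusztig's result (that the nonnegative flag variety equals $G_{\ge0}$ applied to the base point, together with surjectivity of projection from the full to the partial flag variety) to the opposite convention.

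If this citation is unavailable, my fallback is a closure argument, which is more delicate. The open opposite cell $\{P'' : P'' \text{ transverse to } P\}$ is not closed, so a limit of transverse points could a priori fail to be transverse. To get around this I would use that $G_{>0}$ absorbs $G_{\ge 0}$ and that $\cQ^{J'}_{\ge0}$ is the image of a compact set of $G_{\ge0}$-type data, which would restore the representative $h$.
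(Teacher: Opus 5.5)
Your Steps 1 and 3 are fine, but Step 2 rests on a false claim: $\cQ^{J'}_{\ge 0}$ is \emph{not} $\{h\cdot P^-_{J'} : h\in G_{\ge 0}\}$. Since $G_{\ge 0} = U^+_{\ge 0}T_{>0}U^-_{\ge 0}\subseteq B^+B^-$, writing $h=b^+b^-$ gives $h\cdot P^-_{J'} = b^+\cdot P^-_{J'}$. So the right-hand side is just $U^+_{\ge0}\cdot P^-_{J'}$, which lies in the open cell $B^+\cdot P^-_{J'}$.

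On the other hand, $\cQ^{J'}_{\ge 0} = \cP^{\tau_I(J')}_{\ge 0}$ contains its base point $P^+_{\tau_I(J')}$: let all parameters of $y_{i_1}(t_1)\cdots y_{i_l}(t_l)$ tend to $0$. But $b^+\cdot P^-_{J'} = P^+_{\tau_I(J')}$ would force $P^-_{J'} = P^+_{\tau_I(J')}$, i.e.\ $J'=I$. Already for $G=\SL_2(\CC)$, identify each Borel subgroup with the line it stabilizes. Then $\cQ^{\emptyset}_{\ge 0}$ is the closed arc $\{[1:t] : t\ge 0\}\cup\{[0:1]\}\subseteq\PP^1(\RR)$, while $G_{\ge 0}\cdot B^-$ is $\{[s:1] : s\ge 0\}$. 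This misses $[1:0]$, i.e.\ $B^+$ itself.

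The nonnegative flag variety is a closure (equivalently, a union of the cells $\Pi^{>0}_{v,w}$), not a single $G_{\ge0}$-orbit. So your argument only handles those $P'$ lying in $B^+\cdot P^-_{J'}$. Your fallback cannot repair this. No representative $h\in G_{\ge 0}$ exists for the remaining boundary points, and, as you note yourself, transversality is an open condition, so it does not pass to limits.

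What is missing is an input that controls the boundary of $\cQ^{J'}_{\ge0}$. One option is Lusztig's result that $G_{>0}$ maps $\cQ^{J'}_{\ge 0}$ into $\cQ^{J'}_{>0} = \{u\cdot P^-_{J'} : u\in U^+_{>0}\}$. With $x\in U^+_{>0}$ and $g\in U^-_{>0}$ as in your Step 2, we have $xg\in G_{>0}$. Hence $xg\cdot P' = u\cdot P^-_{J'}$ for some $u\in U^+_{>0}$, while $xg\cdot P = x\cdot P^+_J = P^+_J$. Conjugating by $u^{-1}xg$ then carries $(P,P')$ to $(P^+_J,P^-_{J'})$, which gives transversality directly from the definition.

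The paper instead combines your Step 1 reduction with the surjectivity of $\cP^\emptyset_{<0}\to\cP^J_{<0}$ and $\cQ^\emptyset_{\ge0}\to\cQ^{J'}_{\ge0}$. It then cites Barkley--Karp's theorem that a totally negative Borel subgroup and a totally nonnegative Borel subgroup are always opposed. That theorem is what your Steps 2--3 attempt to reprove, and it holds including at the boundary points your representative $h$ cannot reach.
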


\begin{proof}
Since the maps $\cP_{<0}^\emptyset\to \cP^J_{<0}$ and $\cQ^{\emptyset}_{\geq 0} \to \cQ^{J'}_{\geq 0}$ are surjective, there exist Borel subgroups $B\subseteq P$ and $B'\subseteq P'$ such that $B\in\cP^\emptyset_{<0}$ and $B'\in\cP^\emptyset_{\ge 0}$. Then $B$ and $B'$ are opposed by \cite[Theorem 1.7(i)]{barkley_karp}, i.e., $B\cap B'$ is a maximal torus. Therefore $P$ and $P'$ are transverse by \cref{prop:transverseparabolics}\ref{prop:transverseparabolics_borels}.
\end{proof}

\begin{remark}\label{negate_pinning}
We note that the totally positive parts and totally negative parts of all spaces defined above are exchanged by inverting the pinning of $G$, i.e., by replacing $x_i(t)$ and $y_i(t)$ with $x_i(-t)$ and $y_i(-t)$, respectively. In particular, total negativity is the same notion as total positivity up to a simple change of conventions. (These two notions become interesting when they are considered simultaneously for the same choice of pinning.)
\end{remark}

\subsection{Flag amplituhedra and flagtopes}\label{sec:flagtopes_definition}
Inspired by the notions of amplituhedra and Grasstopes, we introduce the notion of flag amplituhedra and flagtopes, which serve as a Lie theoretic generalization of amplituhedra and Grasstopes.

\begin{definition}\label{flagtope_definition}
Fix $J,J'\subseteq I$, $P\in\mathcal P^J(\RR)$, and $\cS \subseteq \mathcal{O}_{J,J'}^P(\RR)$. Consider the image $\rho_P(\cS) \subseteq \cQ^{J\cap J'}(L)(\RR)$ under the $P$-restriction map \eqref{restriction_definition}. For certain naturally-arising $P$ and $\cS$ of interest we call $\rho_P(\cS)$ a \newword{flagtope}. In particular, in the case that $P\in\cP^J_{<0}$ and $\cS = \cQ^{J'}_{\ge 0}$ (so that $\cS \subseteq \mathcal{O}_{J,J'}^P$ by \cref{negative_nonnegative_transverse}), we call $\rho_P(\mathcal{Q}^{J'}_{\ge 0})$ a \newword{flag amplituhedron}.
\end{definition}

We note that \cref{flagtope_definition} proves the first part of \cref{theorem_intro}. We will verify the second part of \cref{theorem_intro} in \cref{sec:examples_amplituhedra}.

\begin{remark}\label{open_ended}
Our definition of flagtopes is intentionally open-ended. Our aim is that flagtopes should constitute a natural Lie-theoretic generalization of polytopes which also encompass the various known examples we discuss in \cref{sec:examples}. However, at this moment it is not clear what `natural' should mean. We hope our paper inspires future work on flagtopes which will clarify what further properties the definition should satisfy.
\end{remark}

\begin{remark}\label{flagtope_change_of_conventions}
We could have just as well defined a flag amplituhedron to be $\rho_P(\mathcal{Q}^{J'}_{\le 0})$ for some $P\in\mathcal{P}^J_{>0}$, which is equivalent to our original definition under the simple change of conventions from \cref{negate_pinning}. One reason for our choice of conventions in \cref{flagtope_definition} is that we find restricting $\mathcal{Q}^{J'}_{\ge 0}$ (rather than $\mathcal{Q}^{J'}_{\le0}$) to be simpler for performing calculations with the standard pinnings in the literature.
\end{remark}

\subsection{\texorpdfstring{$P$}{P}-restriction in terms of cosets}
Here we give an equivalent description of $P$-restriction that may be more amenable to computations. Recall that we have the isomorphism
\begin{align}\label{group_to_coset}
\mathcal Q^{J'}\xrightarrow[]{\sim}G/P_{J'}^-,\enspace g\cdot P_{J'}^- \mapsto gP_{J'}^-/P_{J'}^-.
\end{align}
Write $P = h\cdot P_J^+$ for some $h\in G$. Then under this isomorphism, $\cO^P_{J,J'}$ gets identified with a translated $P_J^+$-orbit in $G/P_{J'}^-$:
\[
\mathcal O_{J,J'}^P \simeq hP_J^+P_{J'}^-/P_{J'}^-.
\]
On the other hand, the codomain $\cQ^{J\cap J'}(L)$ can be identified with $(h\cdot L_J)/(h\cdot(L_J\cap P_{J'}^-))$.

Letting $\pi_J:P_J^+ = L_J\ltimes U_{P_J^+}\to L_J$ be the natural projection, the $P$-restriction map, under this identification, can be written as
\begin{align*}
\tilde{\rho}_P&: hP_J^+P_{J'}^-/P_{J'}^-\to (h\cdot L_J)/(h\cdot(L_J\cap P_{J'}^-)),\\
&\hphantom{{}:{}}hpP_{J'}^-/P_{J'}^-\mapsto (h\cdot(\pi_J(p)(L_J\cap P_{J'}^-)))/(h\cdot(L_J\cap P_{J'}^-)).
\end{align*}
We write $\tilde{\rho}$ to emphasize that this map uses cosets.

In the special case $h=\id$ we denote the corresponding map by $\tilde{\rho}_\standard$:
\[
\tilde{\rho}_\standard : P^+_JP_{J'}^-/P_{J'}^- \to L_J/(L_J\cap P_{J'}^-), \enspace pP_{J'}^-/P_{J'}^-\mapsto \pi_J(p)(L_J\cap P_{J'}^-)/(L_J\cap P_{J'}^-).
\]
We observe that we can phrase the flagtope construction purely in terms of $\tilde{\rho}_\standard$, by conjugating $P$ so that it becomes $P_J^+$:
\begin{proposition}\label{flagtope_standard}
Let $P = h\cdot P_J^+ \in\mathcal P^J$ and $\cS \subseteq \mathcal{O}_{J,J'}^P$. Let $\cS_\coset \subseteq G/P_{J'}^-$ denote the image of $\cS$ under the isomorphism from \eqref{group_to_coset}. Then
\[
\rho_P(\mathcal{S}) \simeq \rho_{P_J^+}(h^{-1}\cdot\mathcal{S}) \simeq \tilde{\rho}_\standard(h^{-1}\mathcal{S}_\coset).
\]
\end{proposition}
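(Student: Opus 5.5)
The proof splits into two isomorphisms, each induced by a concrete map; the only real content is checking compatibility of these maps with $\rho$.

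\emph{First isomorphism.} We use conjugation by $h^{-1}$, i.e.\ the automorphism $c := h^{-1}\cdot(-)$ of $G$. It sends $P$ to $P_J^+$ and $U_P$ to $U_{P_J^+}$, since unipotent radicals are intrinsic and so preserved by automorphisms. Hence it induces an isomorphism
\[
\bar c : L = P/U_P \xrightarrow{\sim} P_J^+/U_{P_J^+},
\]
and thus an isomorphism $\cQ^{J\cap J'}(L)\xrightarrow{\sim}\cQ^{J\cap J'}(P_J^+/U_{P_J^+})$. Conjugation preserves types, cotypes and transversality, so $c$ maps $\cO^P_{J,J'}$ bijectively onto $\cO^{P_J^+}_{J,J'}$. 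Moreover $\type_L$ is defined through preimages in $P$, and these are carried to preimages in $P_J^+$, so the cotype $J\cap J'$ is preserved. For $P'\in\cS$ we then have
\[
c\bigl((P\cap P')U_P\bigr)=(P_J^+\cap h^{-1}\cdot P')U_{P_J^+}.
\]
Passing to quotients gives $\bar c(\rho_P(P'))=\rho_{P_J^+}(h^{-1}\cdot P')$, and hence $\rho_P(\cS)\simeq\rho_{P_J^+}(h^{-1}\cdot\cS)$.

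\emph{Second isomorphism.} Here we transport along \eqref{group_to_coset} and the identification $P_J^+/U_{P_J^+}\simeq L_J$. Every element of $\cO^{P_J^+}_{J,J'}$ has the form $p\cdot P_{J'}^-$ with $p\in P_J^+$, because $P_J^+$ acts transitively on $\cO^{P_J^+}_{J,J'}$. Under \eqref{group_to_coset} it corresponds to $pP_{J'}^-/P_{J'}^-$. By $P_J^+$-equivariance of $\rho_{P_J^+}$, and since $(P_J^+\cap P_{J'}^-)U_{P_J^+}/U_{P_J^+}$ corresponds to $L_J\cap P_{J'}^-$ (computed root-by-root in the proof of \cref{restriction_cotype}), we get
\[
\rho_{P_J^+}(p\cdot P_{J'}^-)=\pi_J(p)\cdot(L_J\cap P_{J'}^-).
\]
Under the coset isomorphism $\cQ^{J\cap J'}(L_J)\simeq L_J/(L_J\cap P_{J'}^-)$ (valid since parabolics are self-normalizing in $L_J$), this is exactly $\tilde\rho_\standard(pP_{J'}^-/P_{J'}^-)$.

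Finally, under \eqref{group_to_coset} the set $h^{-1}\cdot\cS$ corresponds to $h^{-1}\cS_\coset$, since $(h^{-1}g)\cdot P_{J'}^-\mapsto h^{-1}gP_{J'}^-/P_{J'}^-$. Combining the two steps gives the stated chain of isomorphisms.

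The main obstacle is well-definedness in the second step. Writing an element as $p\cdot P_{J'}^-$ only determines $p$ up to right multiplication by $q\in P_J^+\cap P_{J'}^-$, so we must check that $\pi_J(q)\in L_J\cap P_{J'}^-$. This holds because $\pi_J$ kills $U_{P_J^+}$ and the root analysis in \cref{restriction_cotype} shows $\pi_J(P_J^+\cap P_{J'}^-)=L_J\cap P_{J'}^-$.
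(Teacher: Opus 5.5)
Your proposal is correct and follows the same route as the paper: the first isomorphism comes from conjugating by $h^{-1}$, and the second from transporting $\rho_{P_J^+}$ through the coset identification \eqref{group_to_coset} together with $P_J^+/U_{P_J^+}\simeq L_J$. You additionally supply details the paper's two-line proof leaves implicit, namely $P_J^+$-equivariance, the identification of $(P_J^+\cap P_{J'}^-)U_{P_J^+}/U_{P_J^+}$ with $L_J\cap P_{J'}^-$, and well-definedness of the coset formula for $\tilde{\rho}_\standard$.
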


\begin{proof}
The first isomorphism follows by conjugating by $h^{-1}$. The second isomorphism follows from the identification above.
\end{proof}

In certain cases we can simplify the description in \cref{flagtope_standard} even further so that the image of the restriction map can be identified with a subset of an affine space. This is particularly useful for calculating examples, since it allows us to work in a single affine chart.

Given $J,J'\subseteq I$, define
\[
\tilde{\rho}_\affine : \prod_{\alpha\in\Phi^+\setminus\Phi_{J'}^+}U_\alpha = \prod_{\alpha \in \Phi^+_J \setminus\Phi^+_{J\cap J'}} U_\alpha \times \prod_{\alpha \in \Phi^+\setminus(\Phi^+_{J}\cup \Phi^+_{J'})} U_\alpha \to \prod_{\alpha \in \Phi^+_J\setminus \Phi^+_{J\cap J'}} U_\alpha
\]
to be projection onto the first factor. (We emphasize that while the map $\tilde{\rho}_\affine$ depends on both $J$ and $J'$, its domain only depends on $J'$.) For the domain of $\tilde{\rho}_\affine$, we have the isomorphism
\begin{align}\label{domain_affine}
\psi_{J'} : \prod_{\alpha\in\Phi^+\setminus\Phi_{J'}^+}U_\alpha \xrightarrow[]{\sim} B^+P^-_{J'}/P^-_{J'}, \enspace g\mapsto gP_{J'}^-/P_{J'}^-.
\end{align}
Similarly, the codomain of $\tilde{\rho}_\affine$ is isomorphic to
\[
(B^+\cap L_J)(L_J\cap P^-_{J'})/(L_J\cap P^-_{J'}) \subseteq L_J/(L_J\cap P^-_{J'}).
\]
Then $\tilde{\rho}_\affine$ is identified through these isomorphisms with the restriction of $\tilde{\rho}_\standard$ to this domain and codomain.
\begin{proposition}\label{flagtope_affine}
Let $P = h\cdot P_J^+ \in\mathcal P^J$ and $\cS \subseteq h\cdot (B^+\cdot P_{J'}^-)$. Let $\cS_\coset \subseteq G/P_{J'}^-$ denote the image of $\cS$ under the isomorphism from \eqref{group_to_coset}, and let $\psi_{J'}^{-1}(h^{-1}\mathcal{S}_\coset)$ denote the subset of the domain of $\tilde{\rho}_\affine$ corresponding to $h^{-1}\mathcal{S}_\coset \subseteq B^+P_{J'}^-/P_{J'}^-$ under the (inverse) isomorphism from \eqref{domain_affine}. Then $\mathcal{S}\subseteq\mathcal{O}_{J,J'}^P$, and
\[
\rho_P(\mathcal{S}) \simeq \tilde{\rho}_\affine(\psi_{J'}^{-1}(h^{-1}\mathcal{S}_\coset)).
\]
\end{proposition}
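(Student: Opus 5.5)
First I reduce to the standard case by conjugating by $h^{-1}$. By \cref{flagtope_standard}, $\rho_P(\cS)\simeq\tilde{\rho}_\standard(h^{-1}\cS_\coset)$, provided $\cS\subseteq\cO^P_{J,J'}$. The hypothesis $\cS\subseteq h\cdot(B^+\cdot P_{J'}^-)$ means that $h^{-1}\cdot\cS$ consists of conjugates $b\cdot P_{J'}^-$ with $b\in B^+$. Since $B^+\subseteq P_J^+$, the pair $(P_J^+,b\cdot P_{J'}^-)=b\cdot(P_J^+,P_{J'}^-)$ lies in $\cO_{J,J'}$. Conjugating back by $h$ shows that each element of $\cS$ is transverse to $P$, which gives $\cS\subseteq\cO^P_{J,J'}$. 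In coset language, $h^{-1}\cS_\coset\subseteq B^+P_{J'}^-/P_{J'}^-\subseteq P_J^+P_{J'}^-/P_{J'}^-$, which is the domain of $\tilde{\rho}_\standard$.

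It therefore suffices to compute $\tilde{\rho}_\standard$ on $B^+P_{J'}^-/P_{J'}^-$ using the parametrization $\psi_{J'}$. An element of the domain of $\tilde{\rho}_\affine$ is a product $g=g_1g_2$ with $g_1\in\prod_{\alpha\in\Phi^+_J\setminus\Phi^+_{J\cap J'}}U_\alpha$ and $g_2\in\prod_{\alpha\in\Phi^+\setminus(\Phi^+_J\cup\Phi^+_{J'})}U_\alpha$, taken in the order of the stated product decomposition. Every root in the second factor lies in $\Phi^+\setminus\Phi^+_J$, so $g_2\in U_{P_J^+}$ and $\pi_J(g_2)=e$. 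Every root in the first factor lies in $\Phi_J$, so $g_1\in L_J$ and $\pi_J(g_1)=g_1$. Hence $\tilde{\rho}_\standard(gP_{J'}^-)=g_1(L_J\cap P_{J'}^-)$.

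Finally, I identify $\prod_{\alpha\in\Phi^+_J\setminus\Phi^+_{J\cap J'}}U_\alpha$ with $(B^+\cap L_J)(L_J\cap P^-_{J'})/(L_J\cap P^-_{J'})$ via $g_1\mapsto g_1(L_J\cap P_{J'}^-)$. This is the analogue of $\psi_{J'}$ for the reductive group $L_J$ with parabolic subgroup $L_J\cap P_{J'}^-=P^-_{J\cap J'}(L_J)$, whose positive roots are $\Phi^+_{J\cap J'}$. So it is an isomorphism by the same big-cell fact that underlies \eqref{domain_affine}, applied to $L_J$. Composing these identifications shows that $\tilde{\rho}_\standard$ restricted to $\psi_{J'}$ of the domain is $\tilde{\rho}_\affine$, and hence $\rho_P(\cS)\simeq\tilde{\rho}_\affine(\psi_{J'}^{-1}(h^{-1}\cS_\coset))$.

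The step needing the most care is the factorization of the unipotent group $\prod_{\alpha\in\Phi^+\setminus\Phi^+_{J'}}U_\alpha$ into the two factors in the stated order. I expect this to follow from the standard fact that, for a closed set of positive roots, the product of root subgroups in any order is an isomorphism of varieties onto the group they generate. The first set $\Phi^+_J\setminus\Phi^+_{J\cap J'}$ is closed, and the second set is an ideal (normalized by the first), so the product $g_1g_2$ ranges over the whole unipotent radical of $B^+$ modulo $\Phi^+_{J'}$ uniquely. This is what makes $\psi_{J'}$ and the decomposition compatible.
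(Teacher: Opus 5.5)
Your proof is correct and follows the paper's argument. You obtain transversality from $B^+\subseteq P_J^+$ and conjugation by $h$, then apply \cref{flagtope_standard}, and then identify $\tilde{\rho}_\affine$ with the restriction of $\tilde{\rho}_\standard$ to the big cell; the paper only cites ``the discussion above'' for this last step, whereas you verify it explicitly by factoring $g=g_1g_2$, noting that $\pi_J(g_2)=e$ and $\pi_J(g_1)=g_1$, and applying the big-cell isomorphism in $L_J$ for the parabolic $L_J\cap P_{J'}^-$.
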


\begin{proof}
Since $P_J^+$ and $P_{J'}^-$ are transverse, and $B^+\subseteq P_J^+$, we have $(B^+\cdot P_{J'}^-) \subseteq \mathcal{O}_{J,J'}^{P_J^+}$. Conjugating by $h$ then implies that $\mathcal{S}\subseteq\mathcal{O}_{J,J'}^P$. We have $\rho_P(\mathcal{S}) \simeq \tilde{\rho}_\standard(h^{-1}\mathcal{S}_\coset)$ by \cref{flagtope_standard}, which is isomorphic to $\tilde{\rho}_\affine(\psi_{J'}^{-1}(h^{-1}\mathcal{S}_\coset))$ by the discussion above.
\end{proof}

\begin{remark}\label{flag_amplituhedra_affine}
We point out that \cref{flagtope_affine} can be applied to flag amplituhedra, i.e., when $P\in\cP^J_{<0}$ and $\cS = \cQ^{J'}_{\ge 0}$. Indeed, in this case we have $P = h\cdot P_J^+$ for some $h\in G_{<0}$; in order to apply \cref{flagtope_affine}, we need to check that $\cS \subseteq h\cdot (B^+\cdot P_{J'}^-)$. Equivalently, we need to show that every element of $\cS$ is transverse to $h\cdot B^+$. This follows from \cref{negative_nonnegative_transverse}, since $h\cdot B^+\in\cP^\emptyset_{<0}$. In fact, this argument applies to any $h\in U^-_{<0}B^+$ such that $P = h\cdot P_J^+$.
\end{remark}

\section{Explicit description of \texorpdfstring{$P$}{P}-restriction in type \texorpdfstring{$A$}{A}}\label{sec:type_A}

\noindent In this section we give a more explicit description of transversality and $P$-restriction when $G$ is of Dynkin type $A_{n-1}$. Accordingly, in this section we assume that $G = \SL_n(\CC)$ with the pinning from \cref{eg:pinning}.

\subsection{Partial flags and transversality in type \texorpdfstring{$A$}{A}}\label{sec:type_A_transversality}
We recall the well-known correspondence between parabolic subgroups of $\SL_n(\CC)$ and flags of subspaces, following \cite[Section 2.3]{barkley_karp}. Given $J \subseteq [n-1]$, write $J^c = \{k_1 < \cdots < k_l\}$, where $J^c$ denotes the complement of $J$ in $I = [n-1]$. Then we can identify $\mathcal{P}^J$ with the partial flag variety $\Fl_{k_1, \dots, k_l;n}(\CC)$ consisting of all tuples of subspaces
\[
F_\bullet = (0 \subset F_1 \subset \cdots \subset F_l \subset \CC^n),
\]
where each $F_i$ is a $k_i$-dimensional subspace of $\CC^n$. (Explicitly, $F_\bullet$ is identified with its stabilizer subgroup $\Stab(F_\bullet)\in\mathcal{P}^J$. Conversely, $g\cdot P_J^+\in\mathcal{P}^J$ corresponds to the flag $F_\bullet$, where each $F_i$ is the span of the first $k_i$ columns of $g$.) There are two special cases of particular interest:
\begin{itemize}[itemsep=2pt]
\item if $J=\emptyset$, then $\Fl_{1,\dots, n-1;n}(\CC)$ is the variety of \newword{complete flags} in $\CC^n$, which is isomorphic to the variety of Borel subgroups of $\SL_n(\CC)$;
\item if $J = [n-1]\setminus\{k\}$, then $\Fl_{k;n}(\CC)$ is the \newword{Grassmannian} of all $k$-dimensional subspaces of $\CC^n$, denoted $\Gr_{k,n}(\CC)$, which is isomorphic to the variety of maximal parabolic subgroups of $\SL_n(\CC)$ of type $J$.
\end{itemize}

We say that two partial flags $F_\bullet$ and $F'_\bullet$ are \newword{transverse} if and only if their stabilizer subgroups are transverse parabolic subgroups. We have the following characterization of transversality for subspaces:
\begin{lemma}[{\cite[Proposition 4.1]{barkley_karp}}]\label{transverse_subspaces}
Let $V\in\Gr_{k,n}$ and $V'\in\Gr_{k',n}$. Then $V$ and $V'$ are transverse if and only if
\[
\dim(V\cap V') = \max(0,k+k'-n), \enspace \text{ or equivalently, }\enspace \dim(V + V') = \min(n,k+k').
\]
That is, $V$ and $V'$ are transverse if and only if $V\cap V'$ is as small as possible, which is equivalent to $V+V'$ being as large as possible.
\end{lemma}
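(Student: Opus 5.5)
We take $G=\SL_n(\CC)$ with the standard pinning. Let $P=\Stab(V)$, which is a maximal parabolic subgroup of type $J=[n-1]\setminus\{k\}$. Let $P'=\Stab(V')$, of type $[n-1]\setminus\{k'\}$. By \eqref{type_cotype} and $\tau_I(i)=n-i$, the cotype of $P'$ is $J'=[n-1]\setminus\{n-k'\}$.

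\textbf{Forward direction.} By \cref{prop:transverseparabolics}\ref{prop:transverseparabolics_transverse}$\Leftrightarrow$\ref{prop:transverseparabolics_dimension}, we may use a dimension count. Alternatively, we can directly produce $g\in G$ with $g\cdot P=P_J^+$ and $g\cdot P'=P_{J'}^-$. Such a $g$ carries $V$ to the span $E_{[k]}$ of the first $k$ standard basis vectors. Since $\dot w_0\cdot P_{\tau_I(J')}^+=P_{J'}^-$ with $\tau_I(J')=[n-1]\setminus\{k'\}$, the group $P_{J'}^-$ is the stabilizer of the span of the last $k'$ basis vectors $E_{[n-k'+1,n]}$. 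So transversality means exactly that the pair $(V,V')$ lies in the $G$-orbit of $(E_{[k]},E_{[n-k'+1,n]})$. For this standard pair,
\[
\dim(E_{[k]}\cap E_{[n-k'+1,n]})=\max(0,k+k'-n).
\]
Dimensions of intersections are $G$-invariant, which proves the forward direction.

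\textbf{Converse.} Suppose $\dim(V\cap V')=d:=\max(0,k+k'-n)$. We build an adapted basis as follows.
\begin{itemize}
\item Choose a basis $v_1,\dots,v_k$ of $V$ whose last $d$ vectors $v_{k-d+1},\dots,v_k$ span $V\cap V'$.
\item Extend this by vectors spanning a complement of $V\cap V'$ in $V'$, giving $k'-d$ further vectors.
\end{itemize}
Then $\dim(V+V')=k+k'-d=\min(n,k+k')$, so these $k+k'-d\le n$ vectors are linearly independent. Complete them to a basis of $\CC^n$ by inserting the $n-(k+k'-d)$ extra vectors between the $V$-block and the $V'$-block. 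After reordering so the shared vectors sit in positions $n-k'+1,\dots,k$, we obtain a matrix $g^{-1}$ whose first $k$ columns span $V$ and whose last $k'$ columns span $V'$. Rescaling one column puts it in $\SL_n$. Then $g$ sends $(V,V')$ to the standard pair, so the stabilizers are transverse.

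\textbf{Main obstacle.} The only care needed is the bookkeeping of the column positions, so that the intersection vectors occupy the overlap $[n-k'+1,k]$ when $k+k'>n$, and the identification of $P_{J'}^-$ with the stabilizer of the last $k'$ coordinates. The equivalence between the two dimension conditions follows from $\dim(V+V')=k+k'-\dim(V\cap V')$.
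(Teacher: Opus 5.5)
The paper does not prove this lemma; it quotes it from \cite[Proposition 4.1]{barkley_karp}, so there is no in-paper argument to compare against. Your proof is correct and self-contained.

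The key reduction is right. You compute $\cotype_G(\Stab(V')) = [n-1]\setminus\{n-k'\}$ correctly and identify $P_{J'}^-$ with the stabilizer of $E_{[n-k'+1,n]}$. Hence transversality means exactly that $(V,V')$ lies in the $G$-orbit of $(E_{[k]},E_{[n-k'+1,n]})$. The forward direction then follows from $G$-invariance of $\dim(V\cap V')$, and the converse from the adapted-basis construction. The bookkeeping checks out: when $k+k'>n$ there are no filler vectors, and the $d$ shared vectors already sit in positions $k-d+1,\dots,k = n-k'+1,\dots,k$, so no reordering is needed.

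One could instead route the argument through \cref{prop:transverseparabolics}\ref{prop:transverseparabolics_dimension} or \ref{prop:transverseparabolics_roots2}. That would require computing $\dim(\Stab(V)\cap\Stab(V'))$, or root data relative to a common torus. Your approach works directly from the definition of transversality and is the more elementary one. Your opening mention of the dimension criterion is never used and can be dropped.

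Two small points should be made explicit:
\begin{itemize}
\item In the forward direction you use that $g\cdot\Stab(V) = P_J^+$ forces $gV = E_{[k]}$. This holds because $E_{[k]}$ is the unique $k$-dimensional subspace stable under $B^+\subseteq P_J^+$; equivalently, it is the identification $\Gr_{k,n}(\CC)\cong\cP^J$.
\item The degenerate cases $k\in\{0,n\}$ or $k'\in\{0,n\}$ fall outside your ``maximal parabolic'' phrasing, since there the stabilizer is all of $G$. Both sides of the equivalence hold trivially in these cases.
\end{itemize}
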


We show that transversality of subspaces similarly characterizes transversality for all partial flag varieties of $\SL_n(\CC)$:
\begin{proposition}\label{type_A_transversality}
Let $F_\bullet$ and $F'_\bullet$ be partial flags of $\CC^n$. Then $F_\bullet$ and $F'_\bullet$ are transverse if and only if $F_i$ and $F'_{i'}$ are transverse for all subspaces $F_i$ in $F_\bullet$ and $F'_{i'}$ in $F'_\bullet$.
\end{proposition}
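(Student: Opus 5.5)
The plan is to deduce this from \cref{transversality_maximal_parabolics}, translating its statement about maximal parabolic subgroups into the language of flags. Write $J = \type_G(\Stab(F_\bullet))$ and $J' = \cotype_G(\Stab(F'_\bullet))$. The first step is to identify the maximal parabolic subgroups containing $\Stab(F_\bullet)$.

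By the remark preceding \cref{transversality_maximal_parabolics}, these are indexed by the sets $I\setminus\{k\}$ with $k\in J^c$, that is, by the dimensions $k_i$ occurring in $F_\bullet$. For such $k=k_i$, the corresponding maximal parabolic is $\Stab(F_i)$. Indeed, $\Stab(F_i)$ is a maximal parabolic subgroup containing $\Stab(F_\bullet)$, and distinct $i$ give distinct types. By counting, these are exactly all the maximal parabolics containing $\Stab(F_\bullet)$.

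The same argument applies to $F'_\bullet$, using cotypes in place of types. Here one should check that a parabolic containing $\Stab(F'_\bullet)$ is the stabilizer of one of the subspaces $F'_{i'}$. This is cleanest via types: any parabolic subgroup has a type, and the containment argument above is symmetric. So I would phrase the indexing by types for both flags, using the stated fact about maximal parabolics containing a given parabolic.

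With this identification, \cref{transversality_maximal_parabolics} says that $\Stab(F_\bullet)$ and $\Stab(F'_\bullet)$ are transverse if and only if $\Stab(F_i)$ and $\Stab(F'_{i'})$ are transverse for all $i,i'$. By definition, this says $F_\bullet$ and $F'_\bullet$ are transverse if and only if each pair $F_i, F'_{i'}$ is transverse as a pair of one-step flags (subspaces), which is the claim.

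The main obstacle is the justification that the maximal parabolics containing $\Stab(F_\bullet)$ are precisely the $\Stab(F_i)$. I would argue this by conjugating to the standard case $F_\bullet$ = coordinate flag with $\Stab(F_\bullet)=P_J^+$. There the maximal parabolics containing $P_J^+$ are the $P_{I\setminus\{k\}}^+$ for $k\notin J$, and $P_{I\setminus\{k\}}^+$ is visibly the stabilizer of $\mathrm{span}(e_1,\dots,e_k)$, which is the relevant $F_i$. Conjugation by $g$ then transports this to the general case.
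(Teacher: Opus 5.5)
Your proposal is correct and takes the same route as the paper, which also deduces the statement from \cref{transversality_maximal_parabolics} applied to $G=\SL_n(\CC)$. You additionally spell out the step the paper leaves implicit: the maximal parabolic subgroups containing $\Stab(F_\bullet)$ are exactly the stabilizers $\Stab(F_i)$, by the indexing of maximal parabolics by type, or by conjugating to the standard flag.
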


\begin{proof}
As mentioned above, the result holds when $F_\bullet$ and $F'_\bullet$ each consist of a single subspace by \cite[Proposition 4.1]{barkley_karp}. The general result then follows from \cref{transversality_maximal_parabolics} in the case of $G = \SL_n(\CC)$. Alternatively, rather than applying \cref{transversality_maximal_parabolics}, we can use the well-known fact that the relative position of $F_\bullet$ and $F'_\bullet$ is uniquely determined by the dimensions $\dim(F_i \cap F'_{i'})$ for all $i$ and $i'$.
\end{proof}

\subsection{\texorpdfstring{$P$}{P}-restriction in type \texorpdfstring{$A$}{A}}\label{sec:type_A_restriction}
We now describe $P$-restriction when $G = \SL_n(\CC)$. We emphasize that our description is phrased solely in terms of types, without referring to cotypes. The reason is that in order to naturally associate a flag of subspaces to a parabolic subgroup of cotype $J$, we would want to use the map $\cdot^\perp$ from \eqref{perp_formula}. While $\cdot^\perp$ has a simple description on flags of subspaces (it is given by taking the orthogonal complement with respect to the standard bilinear pairing on $\CC^n$), we prefer to avoid using it for now.

Given $F_\bullet\in\Fl_{k_1, \dots, k_l; n}(\CC)$ with associated parabolic subgroup $P = \Stab(F_\bullet)$, let $L$ be the Levi factor of $P$. We can naturally identify parabolic subgroups $Q\subseteq L$ with partial flags of $\CC^n$ which refine $F_\bullet$; moreover, $\type_L(Q)$ is the complement in $I$ of the set of dimensions of the associated partial flag. Under these conventions, $P$-restriction is given as follows (as determined by Moonen and Wedhorn \cite[Example 3.3]{moonen_wedhorn04}):
\begin{proposition}\label{type_A_restriction}
Let $F_\bullet\in\Fl_{k_1, \dots, k_l;n}(\CC)$ and $F'_\bullet\in\Fl_{k_1', \dots, k'_{l'};n}(\CC)$ be transverse. Then $\rho_{F_\bullet}(F'_\bullet)$ is the partial flag obtained from the following sequence of nested subspaces by deleting any repetitions:
\[
\begin{array}{rcccccccl}
0 &\subseteq& F_1\cap F'_1 &\subseteq& \cdots &\subseteq& F_1\cap F'_{l'} &\subseteq& \\
F_1 &\subseteq& F_1+(F_2\cap F'_1) &\subseteq& \cdots &\subseteq& F_1+(F_2\cap F'_{l'}) &\subseteq& \\
F_2 &\subseteq& F_2+(F_3\cap F'_1) &\subseteq& \cdots &\subseteq& F_2+(F_3\cap F'_{l'}) &\subseteq& \\
&&&& \vdots \\
F_l &\subseteq& F_l+F'_1 &\subseteq& \cdots &\subseteq& F_l+F'_{l'} &\subseteq& \CC^n.
\end{array}
\]
This flag is obtained from $F_\bullet$ by including the subspace
\[
F_i + (F_{i+1}\cap F'_{i'}) \enspace \text{(which has dimension $k_i + k_{i+1} + k'_{i'} - n$)}
\]
for all $i\in \{0,\ldots,l\}$ and $i'\in [l']$ such that $k_i < n - k'_{i'} < k_{i+1}$. (By convention, $F_0 = 0$, $F_{l+1} = \CC^n$, $k_0 = 0$, $k_{l+1} = n$.)
\end{proposition}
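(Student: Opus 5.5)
We reduce to a normal form using a basis adapted to both flags, then compute $(P\cap P')U_P$ directly as a group of block matrices. Write $P=\Stab(F_\bullet)$ and $P'=\Stab(F'_\bullet)$; these are transverse by hypothesis.

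\emph{Adapted basis.} By \cref{prop:transverseparabolics}\ref{prop:transverseparabolics_borels} there are Borel subgroups $B\subseteq P$ and $B'\subseteq P'$ with $B\cap B'$ a maximal torus. In type $A$ this means the following:
\begin{itemize}
\item $B$ and $B'$ are the stabilizers of two complete flags that are opposite.
\item So there is a basis $v_1,\dots,v_n$ of $\CC^n$ with $B$ stabilizing $\langle v_1,\dots,v_j\rangle$ and $B'$ stabilizing $\langle v_{n-j+1},\dots,v_n\rangle$ for all $j$.
\item Since $F_\bullet$ is refined by the flag of $B$, and $F'_\bullet$ by the flag of $B'$, we get
\[
F_i=\langle v_1,\dots,v_{k_i}\rangle, \qquad F'_{i'}=\langle v_{n-k'_{i'}+1},\dots,v_n\rangle .
\]
\end{itemize}
In this basis, $P$ is block upper triangular with cuts at the $k_i$, and $P'$ is block lower triangular with cuts at the $n-k'_{i'}$. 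The torus $T$ of diagonal matrices lies in $P\cap P'$.

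\emph{Computing the dimensions.} In this basis we read off
\[
F_{i+1}\cap F'_{i'}=\langle v_{n-k'_{i'}+1},\dots,v_{k_{i+1}}\rangle ,
\]
which is zero unless $n-k'_{i'}<k_{i+1}$. Adding $F_i$ gives a coordinate subspace of dimension
\[
\max\bigl(k_i,\ \min(k_{i+1},\,k_i+k_{i+1}+k'_{i'}-n)\bigr).
\]
\begin{itemize}
\item If $k_i<n-k'_{i'}<k_{i+1}$, this equals $k_i+k_{i+1}+k'_{i'}-n$, strictly between $k_i$ and $k_{i+1}$, so it is a genuinely new subspace.
\item Otherwise it equals $F_i$ or $F_{i+1}$, i.e.\ it is a repetition.
\end{itemize}
This gives the second description in the statement from the first. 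It also shows the listed sequence is nested: each subspace is a coordinate span of an initial segment $v_1,\dots,v_m$, and $m$ increases along the array.

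\emph{Identifying the parabolic.} Let $E_\bullet$ be the refined flag and $Q=\Stab(E_\bullet)$. We claim $(P\cap P')U_P=Q$.

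For the containment $(P\cap P')U_P\subseteq Q$:
\begin{itemize}
\item $P\cap P'$ stabilizes every $F_i$, $F_{i+1}$ and $F'_{i'}$, hence every $F_i+(F_{i+1}\cap F'_{i'})$.
\item $U_P$ acts trivially on each $F_{i+1}/F_i$, so it stabilizes every subspace lying between $F_i$ and $F_{i+1}$.
\end{itemize}

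For equality, both groups are closed, connected and contain the diagonal torus, so it suffices to compare root sets. As in the proof of \cref{restriction_cotype}, the roots $\varepsilon_a-\varepsilon_b$ (the matrix entry $(a,b)$) of $(P\cap P')U_P$ are:
\begin{itemize}
\item all entries above the diagonal blocks of $F_\bullet$ (the roots of $U_P$);
\item inside a diagonal block $(k_i,k_{i+1}]$, exactly the entries allowed by the block lower-triangular shape of $P'$ with cuts at $n-k'_{i'}$.
\end{itemize}
Now $Q$ is block upper triangular for the cuts of $E_\bullet$. Restricted to the block $(k_i,k_{i+1}]$, the new subspaces are spanned by terminal segments $v_{n-k'_{i'}+1},\dots,v_{k_{i+1}}$ modulo $F_i$. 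The corresponding block pattern is then the same as for $P'$ in that block; we check this index by index. Entries of $Q$ below the diagonal blocks of $F_\bullet$ vanish because $Q\subseteq P$. Hence the root sets coincide and $Q=(P\cap P')U_P$.

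Under the identification of parabolics of $L$ with flags refining $F_\bullet$, $\rho_{F_\bullet}(F'_\bullet)$ is therefore $E_\bullet$.

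The main obstacle is the index bookkeeping in this last root comparison, in particular matching the "terminal segment within a block" subspaces with the block-lower-triangular pattern of $P'$. To keep it clean, I would first treat $l=0$ (the case $F_\bullet=(0\subset\CC^n)$, i.e.\ $P=G$), where the claim reduces to $F'_\bullet$ itself. The general case then follows block by block, since everything decomposes along $F_{i+1}/F_i$.
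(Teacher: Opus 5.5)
Your argument is correct in substance, but it takes a different route from the paper. The paper does not prove the array description itself. It cites Moonen--Wedhorn \cite[Example 3.3]{moonen_wedhorn04} for it. It then determines which subspaces are new from the general type formula \eqref{type_formula}, computing $\tau_J(p)=k_i+k_{i+1}-p$ on $(k_i,k_{i+1})$, and uses \cref{transverse_subspaces} to check that $F_i+(F_{i+1}\cap F'_{i'})$ has the predicted dimension.

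You instead give a self-contained proof. The simultaneously adapted basis from \cref{prop:transverseparabolics}\ref{prop:transverseparabolics_borels} makes every subspace a coordinate subspace. The dimensions can then be read off directly, with no $\tau_J$ and no transversality lemma. The identity $(P\cap P')U_P=\Stab(E_\bullet)$ becomes a root-set comparison, which is the type-$A$ version of the proof of \cref{restriction_cotype} carried out in that basis. The paper's route is shorter and showcases the Lie-theoretic formula; yours actually establishes the cited Moonen--Wedhorn description.

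You should fix one claim, because taken literally it breaks your final comparison. You say each subspace in the array is spanned by an initial segment $v_1,\dots,v_m$, and that $Q$ is block upper triangular for the cuts of $E_\bullet$. Both are false. In your basis the new subspace is
\[
F_i+(F_{i+1}\cap F'_{i'})=\langle v_1,\dots,v_{k_i},\,v_{n-k'_{i'}+1},\dots,v_{k_{i+1}}\rangle,
\]
which is not an initial segment when $k_i<n-k'_{i'}<k_{i+1}$. A block upper triangular pattern inside the block $(k_i,k_{i+1}]$ could not match $P'$, which is block lower triangular there.

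What is true, and what your later sentence about terminal segments says, is the following. $Q$ consists of the matrices in $P$ whose diagonal block on $(k_i,k_{i+1}]$ is block lower triangular, with cuts at those values $n-k'_{i'}$ lying strictly inside $(k_i,k_{i+1})$. Cuts outside this interval impose no condition on the block, for $Q$ or for $P'$. With this description the root sets of $Q$ and $(P\cap P')U_P$ visibly coincide.

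Nestedness needs no coordinates at all. It follows from $F'_{i'}\subseteq F'_{i'+1}$ and $F_i\subseteq F_i+(F_{i+1}\cap F'_{i'})\subseteq F_{i+1}$.
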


\begin{proof}
The first part of the statement is from \cite[Example 3.3]{moonen_wedhorn04}. For the second part of the statement, let $J = \{k_1, \dots, k_l\}^c = \type_G(F_\bullet)$ and $\tilde{J} = \{k_1', \dots, k_{l'}'\}^c = \type_G(F'_\bullet)$. Then the set of dimensions of the subspaces appearing in the flag $\rho_{F_\bullet}(F'_\bullet)$ is $\type_L(\rho_{F_\bullet}(F'_\bullet))^c$, which by \eqref{type_formula} equals
\[
J^c \cup \tau_J(J\cap\tau_I(\tilde{J}^c)) = \{k_1, \dots, k_l\} \cup \tau_J(J \cap \{n-k_{l'}', \dots, n-k_1'\}).
\]
Note that $\tau_J(p) = k_i + k_{i+1} - p$ for all $p\in (k_i,k_{i+1})$. Accordingly, given $i'\in[l']$ such that $n-k'_{i'}\in J$, take $i\in \{0,\ldots,l\}$ such that $n-k'_{i'}\in (k_i,k_{i+1})$. Then $\tau_J(n-k'_{i'}) = k_i + k_{i+1} + k'_{i'} - n$, so there exists a subspace in $\rho_{F_\bullet}(F'_\bullet)$ of this dimension. We can check using \cref{transverse_subspaces} that the corresponding subspace is $F_i + (F_{i+1}\cap F'_{i'})$.
\end{proof}

\begin{remark}
In \cref{type_A_restriction}, one should technically interpret the image partial flag as a sequence of partial flags in the ambient vector spaces $F_1, F_2/F_1, \dots, F_l/F_{l-1}, \mathbb C^n/F_l$. We work with the single partial flag in $\CC^n$ for simplicity.
\end{remark}

\begin{example}\label{eg:type_A_restriction}
Let $V\in\Gr_{k,n}(\CC)$ and $V'\in\Gr_{k',n}(\CC)$ be transverse, where $1 \le k,k' \le n-1$. Then $\rho_V(V')$ depends on how $k+k'$ compares with $n$:
\begin{itemize}[itemsep=2pt]
\item If $k+k' < n$, then $\rho_V(V')$ is the partial flag
\[
0 \subset V \subset V + V' \subset \CC^n,
\]
which we can regard as an element of $\Gr_{k'}(\CC^n/V)$.
\item If $k+k' = n$, then $\rho_V(V') = V$, which we can regard as a point.
\item If $k+k' > n$, then $\rho_V(V')$ is the partial flag
\[
0 \subset V \cap V' \subset V \subset \CC^n,
\]
which we can regard as an element of $\Gr_{k+k'-n}(V)$.\qedhere
\end{itemize}
\end{example}

\subsection{Total positivity vs.\ Pl\"{u}cker positivity in type \texorpdfstring{$A$}{A}}\label{type_A_positivity}
For partial flag varieties of $G = \SL_n(\CC)$, we have two interesting notions of positivity. One is Lusztig's total positivity, defined in \cref{sec:total_positivity}. To define the other, we recall the Pl\"{u}cker embedding of $\Gr_{k,n}(\CC)$. Let $\binom{[n]}{k}$ denote the set of $k$-element subsets of $[n]$, and given $V\in\Gr_{k,n}(\CC)$, take an $n\times k$ matrix $A$ whose columns span $V$. Then the \newword{Pl\"{u}cker coordinate} $\Delta_K(V)$ (for $K\in\binom{[n]}{k}$) is defined to be the $k\times k$ minor of $A$ located in the rows $K$; these coordinates are well-defined up to global rescaling.

We say that a flag $F_\bullet$ is \newword{Pl\"{u}cker-nonnegative} (respectively, \newword{Pl\"{u}cker-positive}) if and only if each $F_i$ has nonnegative (respectively, positive) Pl\"{u}cker coordinates up to rescaling. We then define $F_\bullet$ to be \newword{Pl\"{u}cker-nonpositive} (respectively, \newword{Pl\"{u}cker-negative}) if and only if $\delta(F_\bullet)$ is Pl\"{u}cker-nonnegative (respectively, Pl\"{u}cker-positive), where $\delta$ denotes the linear map $(v_1, \dots, v_n) \mapsto (v_1, -v_2, v_3, \dots, (-1)^{n-1}v_n)$ on $\CC^n$. (Note that $\delta$ similarly takes totally nonpositive flags to totally nonnegative flags, and takes totally negative flags to totally positive flags.)

We denote total positivity by `$>\!0$' and Pl\"{u}cker positivity by `$\Delta\!>\!0$'. We have
\[
\Fl_{k_1, \dots, k_l;n}^{\ge 0} \subseteq \Fl_{k_1, \dots, k_l;n}^{\Delta\ge 0} \enspace\text{ and }\enspace \Fl_{k_1, \dots, k_l;n}^{>0} \subseteq \Fl_{k_1, \dots, k_l;n}^{\Delta > 0};
\]
by \cite[Theorem 1.1]{bloch_karp23} the containments are equalities if the integers $k_1, \dots, k_l$ are consecutive, and otherwise the containments are strict. (See \cite{barkley_boretsky_eur_gao26} for results in other Lie types.) We have the following analogue of \cref{negative_nonnegative_transverse}:
\begin{lemma}\label{plucker_negative_nonnegative_transverse}
Let $F_\bullet\in\Fl_{k_1, \dots, k_l;n}^{\Delta<0}$ and $F'_\bullet\in\Fl_{k_1', \dots, k_{l'}';n}^{\Delta \ge 0}$. Then $F_\bullet$ and $F'_\bullet$ are transverse.
\end{lemma}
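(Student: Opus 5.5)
By \cref{type_A_transversality}, transversality of $F_\bullet$ and $F'_\bullet$ reduces to transversality of each pair of subspaces $F_i$ and $F'_{i'}$. Moreover, by definition each $F_i$ is Pl\"ucker-negative and each $F'_{i'}$ is Pl\"ucker-nonnegative. So it suffices to treat the Grassmannian case: take $V\in\Gr_{k,n}$ with $\delta(V)$ Pl\"ucker-positive and $V'\in\Gr_{k',n}$ Pl\"ucker-nonnegative, and show that $\dim(V+V') = \min(n,k+k')$, as required by \cref{transverse_subspaces}.

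For the Grassmannian case I would use the fact that Pl\"ucker positivity on a single Grassmannian coincides with total positivity. This is \cite[Theorem 1.1]{bloch_karp23} in the case of a single (hence trivially consecutive) dimension, and the nonnegative version holds as well. Concretely, $V'\in\Gr_{k',n}^{\Delta\ge0} = \Gr_{k',n}^{\ge 0}$, which we identify with a parabolic of the appropriate cotype in $\cQ_{\ge0}$. Likewise $\delta(V)\in\Gr_{k,n}^{>0}$, so $V$ is totally negative, using the remark that $\delta$ exchanges totally negative and totally positive flags. Then \cref{negative_nonnegative_transverse} gives transversality of the corresponding parabolic subgroups, which is transversality of $V$ and $V'$ by definition.

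The main thing to check is bookkeeping. First, the identification of $\Gr_{k',n}^{\ge0}$ as a subset of $\cP^J$ must match the identification of $\cQ^{J'}_{\ge0} = \cP^{\tau_I(J')}_{\ge 0}$ used in \cref{negative_nonnegative_transverse}. Since that lemma is stated in terms of parabolics and transversality is symmetric, I only need $V'$'s stabilizer to lie in $\cQ^{J'}_{\ge0}$ for $J'$ its cotype, which holds because $\cQ^{J'}_{\ge0}$ was defined as $\cP^{\tau_I(J')}_{\ge0}$. Second, the totally negative part in $\SL_n$ must really equal $\delta$ applied to the totally positive part. This holds because conjugation by $\mathrm{diag}(1,-1,1,\dots)$ sends $x_i(t)\mapsto x_i(-t)$ and $y_i(t)\mapsto y_i(-t)$, and it acts on subspaces by $\delta$. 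A minor technical point is that this diagonal matrix may have determinant $-1$, but conjugation is still an automorphism of $\SL_n$.

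As a fallback, I would give a direct sign-variation argument. Suppose $k+k'\le n$ and $V\cap V'\neq 0$. Then a nonzero vector $v$ in the intersection has at most $k'-1$ sign changes, since it lies in a Pl\"ucker-nonnegative $k'$-space (by Gantmacher--Krein, $\overline{\mathrm{var}}$ version). It also has at least $k$ sign changes, since it lies in a space whose $\delta$-image is Pl\"ucker-positive. This gives a contradiction only if handled with the right $\mathrm{var}$ versus $\overline{\mathrm{var}}$ conventions, and the case $k+k'>n$ would follow by taking orthogonal complements. I expect the reduction via \cite{bloch_karp23} to be cleaner, so that is the route I would pursue first.
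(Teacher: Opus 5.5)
Your main route is correct and is essentially the paper's proof: reduce to the Grassmannian case via \cref{type_A_transversality}, use that Pl\"ucker positivity and total positivity coincide on Grassmannians, and apply \cref{negative_nonnegative_transverse}. (In your unneeded fallback, the correct count is that a nonzero vector of $V$ has at least $n-k$ sign changes, not $k$, and it is this bound that contradicts the upper bound of $k'-1$ when $k+k'\le n$.)
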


\begin{proof}
By \cref{type_A_transversality}, the statement reduces to the case that $F_\bullet$ and $F'_\bullet$ each consist of a single subspace, i.e., both partial flag varieties are Grassmannians. This case of the statement follows from \cref{negative_nonnegative_transverse}, since the notions of total positivity and Pl\"{u}cker positivity coincide for Grassmannians.
\end{proof}

Accordingly, given $F_\bullet\in\Fl_{k_1, \dots, k_l;n}^{\Delta<0}$ and $1 \leq k'_1 < \cdots < k'_{l'} \le n-1$, we define the \newword{Pl\"{u}cker-nonnegative flag amplituhedron} to be the image $\rho_{F_\bullet}(\Fl_{k_1', \dots, k_{l'}';n}^{\Delta \ge 0})$, which \cref{plucker_negative_nonnegative_transverse} shows is well-defined. In this context, we call flag amplituhedra as defined in \cref{flagtope_definition} \newword{totally nonnegative flag amplituhedra}, in order to distinguish them from Pl\"{u}cker-nonnegative flag amplituhedra. Pl\"{u}cker-nonnegative flag amplituhedra arise naturally in the physics of scattering amplitudes; see \cref{sec:examples_loop_amplituhedra}.

\section{Examples of flagtopes in the literature}\label{sec:examples}

\noindent In this section we explain how various spaces studied recently in scattering amplitudes, positive geometries, or combinatorics are instances of flagtopes. This shows that the flagtope setup is quite general and can provide a unifying framework in which to study these spaces.

\subsection{(Tree) amplituhedra and Grassmann polytopes}\label{sec:examples_amplituhedra}
Let $k,m,n\ge 0$ with $k+m \le n$, and let $Z$ be a $(k+m)\times n$ matrix whose $(k+m)\times (k+m)$ minors are all positive. We view $Z$ as a linear map $\mathbb{C}^n \to \mathbb{C}^{k+m}$, which induces a rational map on Grassmannians $Z_{\Gr} : \Gr_{k,n}(\CC) \dashrightarrow \Gr_{k,k+m}(\CC)$. The map $Z_{\Gr}$ is well-defined on the totally nonnegative part $\Gr_{k,n}^{\ge 0}$ (this is a special case of \cref{plucker_negative_nonnegative_transverse}); we define the \newword{(tree) amplituhedron}
\[
\mathcal{A}_{n,k,m}(Z) \coloneqq Z_{\Gr}(\Gr_{k,n}^{\ge 0}) \subseteq \Gr_{k,k+m}(\CC).
\]

The amplituhedron was introduced by Arkani-Hamed and Trnka \cite[Section 9]{arkani-hamed_trnka14} as part of a novel geometric approach to the study of scattering amplitudes, building on work of Arkani-Hamed, Bourjaily, Cachazo, Goncharov, Postnikov, and Trnka \cite{arkani-hamed_bourjaily_cachazo_goncharov_postnikov_trnka16}. Specifically, $\mathcal{A}_{n,k,m}(Z)$ for $m=4$ is conjecturally a \emph{positive geometry} whose canonical differential form is the tree-level scattering amplitude in planar $\mathcal{N} = 4$ supersymmetric Yang--Mills theory.

Following \cite[Section 3.2]{karp_williams19}, we can reformulate the amplituhedron construction as below. We will need an elementary fact about orthogonal complements:
\begin{lemma}[{\cite[(3.2)]{barkley_karp}}]\label{orthogonal_complement}
Let $V\in\Gr_{k,n}(\CC)$, and let $V^\perp\in\Gr_{n-k,n}(\CC)$ be the orthogonal complement of $V$ with respect to the standard bilinear pairing on $\CC^n$.
\begin{enumerate}[label=(\roman*), leftmargin=*, itemsep=2pt]
\item\label{orthogonal_complement_nonnegative} We have $V\in\Gr_{k,n}^{\ge 0}$ if and only if $V^\perp\in\Gr_{n-k,n}^{\le 0}$.
\item\label{orthogonal_complement_positive} We have $V\in\Gr_{k,n}^{>0}$ if and only if $V^\perp\in\Gr_{n-k,n}^{<0}$.\hfill\qed
\end{enumerate}
\end{lemma}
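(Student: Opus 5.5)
The plan is to reduce everything to an explicit identity between the Plücker coordinates of $V$ and those of $V^\perp$, and then to track signs through the map $\delta$ from \cref{type_A_positivity}. Three facts will be used. First, for a single Grassmannian, total nonnegativity (respectively, positivity) coincides with Plücker nonnegativity (respectively, positivity) by \cite[Theorem 1.1]{bloch_karp23}. Second, $W\in\Gr_{n-k,n}^{\le 0}$ (respectively, $\Gr^{<0}$) holds exactly when $\delta(W)\in\Gr_{n-k,n}^{\ge 0}$ (respectively, $\Gr^{>0}$), as noted after the definition of Plücker-nonpositivity. Third, Plücker coordinates are only defined up to a global nonzero scalar. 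Given these, both parts of the lemma follow once we show that the vector $\bigl(\Delta_{[n]\setminus K}(\delta(V^\perp))\bigr)_K$ is a constant nonzero multiple of $\bigl(\Delta_K(V)\bigr)_K$, with the constant independent of $K$.

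The key step is the classical duality
\[
\Delta_{[n]\setminus K}(V^\perp) = c\,(-1)^{\sum_{i\in K} i}\,\Delta_K(V)\quad\text{for all } K\in\tbinom{[n]}{k},
\]
where $c\neq 0$ does not depend on $K$. To prove it, I would work on the open chart where some fixed $\Delta_{K_0}(V)\ne 0$; by density and continuity this is enough. After a permutation of coordinates, $V$ is the column span of a block matrix $\begin{bmatrix} I_k \\ A\end{bmatrix}$, and then $V^\perp$ is the column span of $\begin{bmatrix} -\transpose{A} \\ I_{n-k}\end{bmatrix}$. Every Plücker coordinate of either space is, up to sign, a minor of $A$, and the complementary coordinates $\Delta_K(V)$ and $\Delta_{[n]\setminus K}(V^\perp)$ give the same minor of $A$. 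A Laplace-expansion count of the row permutations involved then shows that the relative sign is $(-1)^{\sum_{i\in K} i}$ times a $K$-independent factor. The only real bookkeeping is undoing the initial coordinate permutation, which changes the identity by a global sign and so is harmless. Alternatively, one can derive the identity from the Laplace expansion of $\det[\,M \mid N\,]$, where the columns of $M$ span $V$ and those of $N$ span $V^\perp$, combined with the Cauchy--Binet formula for $\transpose{M}N = 0$.

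Next we apply $\delta$. Since $\delta$ scales coordinate $i$ by $(-1)^{i-1}$, we have $\Delta_L(\delta W) = (-1)^{\sum_{i\in L}(i-1)}\Delta_L(W)$. Taking $L = [n]\setminus K$ and combining with the duality gives
\[
\Delta_{[n]\setminus K}(\delta V^\perp) = c\,(-1)^{\sum_{i\notin K}(i-1) + \sum_{i\in K} i}\,\Delta_K(V) = c\,(-1)^{\binom{n+1}{2} - (n-k)}\,\Delta_K(V).
\]
The sign here is independent of $K$, so after rescaling the coordinates of $\delta(V^\perp)$ agree with those of $V$. Hence $V$ is Plücker-nonnegative (respectively, positive) if and only if $\delta(V^\perp)$ is, which gives both \ref{orthogonal_complement_nonnegative} and \ref{orthogonal_complement_positive}. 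I expect the main obstacle to be the sign bookkeeping in the duality identity. If the direct chart computation proves messy, I would fall back on the Laplace/Cauchy--Binet derivation, which produces the sign $(-1)^{\sum K}$ uniformly.
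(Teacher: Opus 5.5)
Your proposal is correct. The paper gives no argument of its own here: the lemma is quoted from \cite[(3.2)]{barkley_karp} and closed with \qed. Your route is the standard coordinate proof, and the sign count you defer does work out. On the chart spanned by the columns of $\begin{bmatrix} I_k \\ A\end{bmatrix}$, both $\Delta_K(V)$ and $\Delta_{[n]\setminus K}(V^\perp)$ equal $\pm$ the same $r\times r$ minor of $A$. The relative sign is $(-1)^{\sum_{i\in K} i}$ times a factor depending on $K$ only through $r(r+1)$, which is even.

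Two small points. First, you need only one of \emph{density} and \emph{permuting coordinates}. Either use the dense chart $\Delta_{[k]}\neq 0$ together with continuity of $V\mapsto V^\perp$. Or use that every $V$ lies in some chart, together with your (correct) observation that a coordinate permutation only rescales the identity by a global sign. Second, for the converse directions you also need $\delta(\Gr^{\ge 0}_{n-k,n})\subseteq \Gr^{\le 0}_{n-k,n}$, not just the inclusion the paper notes. This holds because conjugation by $\delta$ exchanges the pinning with the inverted pinning of \cref{negate_pinning}.

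A genuinely different proof, closer to the paper's Lie-theoretic framework, goes through \cref{perp_remark}. If $V$ is spanned by the first $k$ columns of some $g\in U^-_{>0}$, then $V^\perp$ is spanned by the last $n-k$ columns of $\transpose{(g^{-1})}\in U^+_{<0}$, so $V^\perp\in\Gr^{<0}_{n-k,n}$. The converse is symmetric, and (i) follows from (ii) by taking closures, since $V\mapsto V^\perp$ is a homeomorphism.

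That route avoids all sign bookkeeping and the appeal to \cite[Theorem 1.1]{bloch_karp23}. It also works verbatim for arbitrary partial flag varieties and other types, where total and Pl\"{u}cker positivity differ. Your route instead produces an explicit Pl\"{u}cker identity, which additionally gives the analogous statement for Pl\"{u}cker positivity of partial flags, one subspace at a time.
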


Let $W\in\Gr_{k+m,n}(\CC)$ denote the row span of $Z$; then the $(k+m)\times (k+m)$ minors of $Z$ are all positive (or all negative) if and only if $W\in\Gr_{k+m,n}^{>0}$. Since $\ker(Z) = W^\perp$, we calculate that
\[
\mathcal{A}_{n,k,m}(Z) \simeq \{V + W^\perp \mid V\in\Gr_{k,n}^{\ge 0}\},
\]
and the right-hand side is precisely the flag amplituhedron $\rho_{W^\perp}(\Gr_{k,n}^{\ge 0})$ under the conventions of \cref{type_A_restriction}. (Note that $W^\perp\in\Gr_{n-k-m,n}^{<0}$ by \cref{orthogonal_complement}\ref{orthogonal_complement_positive}.)

Alternatively, since $(V+W^\perp)^\perp = V^\perp\cap W$, we have
\[
\mathcal{A}_{n,k,m}(Z) \simeq \{V^\perp\cap W \mid V\in\Gr_{k,n}^{\ge 0}\} = \{V' \cap W \mid V' \in\Gr_{n-k,n}^{\le 0}\},
\]
where the last equality follows from \cref{orthogonal_complement}\ref{orthogonal_complement_nonnegative}. The right-hand side above (which was denoted $\mathcal{B}_{n,k,m}(W)$ in \cite[Section 3.2]{karp_williams19}) is similarly precisely the flag amplituhedron $\rho_W(\Gr_{n-k,n}^{\le 0})$ (see \cref{flagtope_change_of_conventions}). We point out that this discussion (along with \cref{eg:type_A_restriction}) proves the second part of \cref{theorem_intro}, i.e., that every flag amplituhedron associated to a pair of type $A$ Grassmannians is in fact an amplituhedron defined by Arkani-Hamed and Trnka.

Lam \cite{lam16} introduced \newword{Grassmann polytopes} (also called \newword{Grasstopes} \cite{mandelshtam_pavlov_pratt25}), which are obtained by replacing $W^\perp$ above with an arbitrary element of $\Gr_{n-k-m,n}(\mathbb{R})$ which is transverse to every element of $\Gr_{k,n}^{\ge 0}$, and replacing $\Gr_{k,n}^{\ge 0}$ with an arbitrary cell in its cell decomposition. Mandelshtam, Pavlov, and Pratt then introduced \newword{rational Grassmann polytopes}, which are obtained by replacing $W^\perp$ with an arbitrary element of $\Gr_{n-k-m,n}(\mathbb{R})$ (with no transversality condition) and restricting the domain to elements of $\Gr_{k,n}^{\ge 0}$ (or a cell thereof) which are transverse to $W^\perp$. All of these constructions are therefore examples of flagtopes.

\subsection{One-loop amplituhedra}\label{sec:examples_loop_amplituhedra}
As we mentioned above, the (tree) amplituhedron encodes the tree-level term of a certain scattering amplitude. This scattering amplitude has an expansion coming from Feynman diagrams whose terms are indexed by the \emph{loop order} $L \ge 0$, where the leading term (also called the tree-level term) is indexed by $L=0$. Arkani-Hamed and Trnka \cite[Section 9]{arkani-hamed_trnka14} similarly defined an \newword{$L$-loop amplituhedron} $\mathcal{A}_{n,k,m;L}(Z)$ which for $m=4$ encodes (conjecturally) the integrand of the term at loop order $L$, where $\mathcal{A}_{n,k,m;0}(Z) = \mathcal{A}_{n,k,m}(Z)$.

The one-loop amplituhedron is defined (for $m\ge 2$) to be
\[
\mathcal{A}_{n,k,m;1}(Z) \coloneqq \{0 \subset Z_{\Gr}(F_1) \subset Z_{\Gr}(F_2) \subset \CC^{k+m} \mid F_\bullet \in\Fl_{k,k+2;n}^{\Delta\ge 0}\}\subseteq \Fl_{k,k+2;k+m}(\CC).
\]
Just as in \cref{sec:examples_amplituhedra}, we can show that
\begin{align}\label{one_loop_amplituhedron}
\mathcal{A}_{n,k,m;1}(Z) \simeq \rho_{W^\perp}(\Fl_{k,k+2;n}^{\Delta\ge 0}),
\end{align}
which is an instance of a Pl\"{u}cker-nonnegative flag amplituhedron.

This is one motivation for studying Pl\"{u}cker-nonnegative flag amplituhedra as opposed to totally nonnegative flag amplituhedra. We emphasize that it would be unsuitable for the connection with physics to replace $\Fl_{k,k+2;n}^{\Delta\ge 0}$ with $\Fl_{k,k+2;n}^{\ge 0}$. Indeed, $\Fl_{k,k+2;n}^{\Delta\ge 0}$ admits a cyclic action of order $n$ induced by the linear map $(v_1, \dots, v_n) \mapsto (v_2, \dots, v_n, (-1)^{k-1}v_1)$ on $\CC^n$, which reflects a cyclic symmetry among the $n$ particles defining the scattering amplitude. On the other hand, this cyclic action does not preserve $\Fl_{k,k+2;n}^{>0}$ (see \cite[Theorem 1.3]{bloch_karp23}).

While the $L$-loop amplituhedron $\mathcal{A}_{n,k,m;L}(Z)$ for $L\ge 2$ (whose definition we do not go into here) is not naturally a flagtope as defined in \cref{flagtope_definition}, one could generalize the definition to construct `$L$-loop flagtopes'. This is beyond the scope of our paper.

\subsection{ABJM amplituhedra}\label{sec:examples_orthogonal}
The \newword{ABJM amplituhedron} was independently introduced by Huang, Kojima, Wen, and Zhang \cite{huang_kojima_wen_zhang22} and by He, Kuo, and Zhang \cite{he_kuo_zhang22}, as follows. Huang and Wen \cite{huang_wen14} (see also \cite{huang_wen_xie14,galashin_pylyavskyy20}) defined the \newword{totally nonnegative orthogonal Grassmannian} $\OG_{n,2n}^{\ge 0}$ to be the subset of $\Gr_{n,2n}^{\ge 0}$ cut out by the equations $\Delta_K = \Delta_{[2n]\setminus K}$ for all $K\in\binom{[2n]}{n}$. (We emphasize that this is \emph{not} the totally nonnegative part one would obtain by taking $G = \SO_{2n}(\CC)$ in \cref{sec:total_positivity}; see \cite[Remark 5.2]{galashin_pylyavskyy20} and \cite[Section 6.3]{boretsky_calvo_cortes_el_maazouz}.) Then the ABJM amplituhedron is defined to be $Z_{\text{Gr}}(\OG_{n,2n}^{\ge 0}) \subseteq \Gr_{n,n+2}(\CC)$, where $Z$ is a $(n+2)\times(2n)$ real matrix whose $(n+2)\times (n+2)$ minors are all positive. As in \cref{sec:examples_amplituhedra}, we have
\[
Z_{\text{Gr}}(\OG_{n,2n}^{\ge 0}) \simeq \rho_{\ker(Z)}(\OG_{n,2n}^{\ge 0}),
\]
so the ABJM amplituhedron is a special case of a flagtope. The motivation for studying ABJM amplituhedra is that they encode scattering amplitudes in ABJM theory \cite{aharony_bergman_jafferis_maldacena08}.

\subsection{Lagrangian amplituhedra}\label{sec_examples_lagrangian}
Krovi and Tessler \cite{krovi_tessler} recently introduced the \newword{Lagrangian amplituhedron}, following the definition of the ABJM amplituhedron from \cref{sec:examples_orthogonal}. Let $\LG_{n,2n}(\CC)$ denote the Lagrangian Grassmannian for the group $G = \Sp_{2n}(\CC)$. Karpman \cite[Theorem 5.16 and Section 6]{karpman18} showed we can identify $\LG_{n,2n}^{\ge 0}$ with the subset of $\Gr_{n,2n}^{\ge 0}$ cut out by the equations $\Delta_{K} = \Delta_{R(K)}$ for all $K\in\binom{[2n]}{n}$, where $R(K) \coloneqq [2n]\setminus\{2n+1-i \mid i\in K\}$. Then the Lagrangian amplituhedron is defined to be $Z_{\text{Gr}}(\LG_{n,2n}^{\ge 0}) \subseteq \Gr_{n,n+2}(\CC)$, where $Z$ is a $(n+2)\times(2n)$ real matrix whose $(n+2)\times (n+2)$ minors are all positive. As in \cref{sec:examples_amplituhedra}, we have
\begin{align}\label{lagrangian_amplituhedron_definition}
Z_{\text{Gr}}(\LG_{n,2n}^{\ge 0}) \simeq \rho_{\ker(Z)}(\LG_{n,2n}^{\ge 0}),
\end{align}
so the Lagrangian amplituhedron is a special case of a flagtope.

We emphasize that in \eqref{lagrangian_amplituhedron_definition} we are working with $G = \SL_{2n}(\CC)$. In particular, the definition of the Lagrangian amplituhedron is different than our definition (in \cref{flagtope_definition}) of the flag amplituhedron $\rho_P(\LG_{n,2n}^{\ge 0})$ in the case $G = \Sp_{2n}(\CC)$.

\section{Open problems}\label{sec:problems}

\noindent In this section we propose some open problems about flagtopes.

\subsection{Positive geometries}\label{sec:problems_positive_geometries}
Inspired by the amplituhedron construction, Arkani-Hamed, Bai, and Lam \cite{arkani-hamed_bai_lam17} introduced the framework of \newword{positive geometries}. Informally, a positive geometry is a positive subset of a complex algebraic variety which has associated to it a \newword{canonical differential form}, with simple poles along the boundary components of the positive geometry. The amplituhedron $\mathcal{A}_{n,k,m}(Z)$ is conjecturally a positive geometry (see \cite{koefler_pavlov_sinn} for recent progress) whose canonical differential form when $m=4$ is the tree-level scattering amplitude in planar $\mathcal{N}=4$ supersymmetric Yang--Mills theory. This framework has since been applied to many other scattering amplitudes and wavefunctions in physics \cite{arkani-hamed_bai_he_yan18,arkani-hamed_benincasa_postnikov17}, and found connections with matroids \cite{eur_lam26} and Hodge theory \cite{brown_dupont25}; we refer to Lam's survey \cite{lam26} for further details.

We expect that flag amplituhedra are also positive geometries:
\begin{problem}\label{problem_positive_geometry}
Show that every flag amplituhedron is a positive geometry, and calculate its canonical form.
\end{problem}

In \cref{sec:hexagon_form} we show that the flag amplituhedron from \cref{figure_hexagon} is a positive geometry and determine its canonical form. We also propose studying more general flagtopes from the perspective of positive geometry.

\subsection{Tilings}\label{sec:problems_tilings}
Another important open problem about amplituhedra is to find their `tilings'. The totally nonnegative complete flag variety $\cP^\emptyset_{\ge 0}$ has a cell decomposition \cite{rietsch99}
\[
\cP^\emptyset_{\ge 0} = \bigsqcup_{v \le w}R_{v,w}^{>0},
\]
where the union is over all $v\le w$ in $W$, and $R_{v,w}^{>0}$ is the totally positive part of the open Richardson variety $(B^-\dot{v}\cdot B^+) \cap (B^+\dot{w}\cdot B^+)$ introduced in \cite{kazhdan_lusztig79}. Similarly, we have the cell decomposition of an arbitrary totally nonnegative partial flag variety
\begin{align}\label{cell_decomposition}
\cP^J_{\ge 0} = \bigsqcup_{\substack{v \le w, \\ w\in W^J_{\text{min}}}}\Pi_{v,w}^{>0},
\end{align}
where the union is over all $v\le w$ in $W$ such that $w$ is a minimal-length coset representative of $W/W_J$, and $\Pi_{v,w}^{>0}$ is the projection of $R_{v,w}^{>0}$ to $\cP^J$.

Consider a flag amplituhedron $\rho_P(\cQ^{J'}_{\ge 0})$, and let $d$ denote the dimension of the image partial flag variety $\cQ^{J\cap J'}(L)$. A \newword{tile} of $\rho_P(\cQ^{J'}_{\ge 0})$ is the image of a $d$-dimensional cell of $\cQ^{J'}_{\ge 0}$ on which $\rho_P$ is injective, and a \newword{tiling} of $\rho_P(\cQ^{J'}_{\ge 0})$ is a collection of tiles which are pairwise disjoint and whose union is dense in $\rho_P(\cQ^{J'}_{\ge 0})$ (in the Hausdorff topology).

Tilings of $\mathcal{A}_{n,k,4}(Z)$ are of particular interest in physics because they lead to different expressions for the associated scattering amplitude. In fact, the original definition in \cite{arkani-hamed_trnka14} was predicated on a conjecture that the BCFW recursion of \cite{britto_cachazo_feng05,britto_cachazo_feng_witten05} can be used to construct tilings of $\mathcal{A}_{n,k,4}(Z)$, which was recently proved in \cite{even-zohar_lakrec_parisi_tessler_sherman-bennett_williams,galashin,galashina}. (See \cite{karp_williams_zhang20,even-zohar_lakrec_tessler25,tessler} for earlier and related work, and \cite{oren-perlstein_tessler} for tilings of ABJM amplituhedra.) We pose the problem of finding tilings of flag amplituhedra:
\begin{problem}\label{problem_tilings}
Find tilings of flag amplituhedra.
\end{problem}

In \cref{sec:hexagon_tilings} we construct two tilings for the flag amplituhedron from \cref{figure_hexagon}. More generally, we pose the problem of finding cell decompositions of flag amplituhedra (which requires identifying the lower-dimensional `tiles'), and studying these problems for general flagtopes.

\subsection{Topology}\label{sec:problems_topology}
Topologically, totally nonnegative flag varieties (and their cell closures) are homeomorphic to closed Euclidean balls \cite{galashin_karp_lam19,galashin_karp_lam22,bao_he24}, and it is expected that every amplituhedron $\mathcal{A}
_{n,k,m}(Z)$ is homeomorphic to a closed ball. (This is only known in certain special cases \cite{sturmfels88,karp_williams19,blagojevic_galashin_palic_ziegler19,bloch_karp23a}.) We pose the analogous problem for flag amplituhedra:
\begin{problem}\label{problem_bal}
Is every flag amplituhedron homeomorphic to a closed ball?
\end{problem}

We see from \cref{figure_hexagon} that the flag amplituhedron depicted there is homeomorphic to a closed ball. We also propose studying the topology of more general flagtopes. See \cite{mandelshtam_pavlov_pratt25} for examples of Grassmann polytopes with nontrivial topology.

\section{Example of a flag amplituhedron: a curved hexagon}\label{sec:hexagon}

\noindent We calculate in detail an example of a flag amplituhedron which is not a Grassmannian amplituhedron in the sense of \cref{sec:examples_amplituhedra}.
\subsection{Setup}
Following the setup of \cref{type_A_restriction}, we take $G = \SL_4(\CC)$, fix $V\in\Gr_{2,4}^{<0}$, and consider the flag amplituhedron obtained by restricting elements $F_\bullet'\in\Fl_{1,3;4}(\CC)$. (In the notation of \cref{sec:flagtopes}, we have $I = \{1,2,3\}$ and
\begin{align*}
J &= \type_G(\Stab(V)) = \{2\}^c = \{1,3\}, \\
\tilde{J} &= \type_G(\Stab(F_\bullet')) = \{1,3\}^c = \{2\}, \\
J' &= \cotype_G(\Stab(F_\bullet')) = \tau_I(\tilde{J}) = \tau_I(\{2\}) = \{2\}.\text{)}
\end{align*}

Since total positivity is strictly stronger than Pl\"{u}cker positivity for $\Fl_{1,3;4}(\CC)$, we have two (potentially) different flag amplituhedra:
\begin{itemize}[itemsep=2pt]
\item the totally nonnegative flag amplituhedron $\rho_V(\Fl_{1,3;4}^{\ge 0})$ (which we denote by $\mathcal{A}_{\ge 0}$); and
\item the Pl\"{u}cker-nonnegative flag amplituhedron $\rho_V(\Fl_{1,3;4}^{\Delta\ge 0})$ (which we denote by $\mathcal{A}_{\Delta\ge 0}$).
\end{itemize}
(Since $\Gr_{2,4}^{\Delta<0} = \Gr_{2,4}^{<0}$, we do not need to make the same distinction for $V$.) In fact, we will show later that $\mathcal{A}_{\ge 0} = \mathcal{A}_{\Delta\ge 0}$. However, we point out that in general, $\mathcal{A}_{\ge 0}$ can be strictly contained in $\mathcal{A}_{\Delta\ge 0}$. For example, if we replace $J = \{1,3\}$ with $J = \{1,2,3\}$ (as in \cref{restriction_examples}\ref{restriction_examples_identity}), then the restriction map becomes the identity map, so $\mathcal{A}_{\ge 0} = \Fl_{1,3;4}^{\ge 0}$ and $\mathcal{A}_{\Delta\ge 0} = \Fl_{1,3;4}^{\Delta\ge 0}$.

We take $V$ to be given as follows:
\[
V\in\Gr_{2,4}^{<0} \text{ is represented by } \begin{bmatrix}
1 & 0 \\
-1 & -1 \\
1 & 2 \\
-1 & -3
\end{bmatrix},
\]
i.e., $V$ is the column span of the matrix above. The fact that $V\in\Gr_{2,4}^{<0}$ is equivalent to $\delta(V)\in\Gr_{2,4}^{>0}$, i.e., $(-1)^{i+j}\Delta_{i,j}(V) > 0$ for all $1 \le i < j \le 4$. Since
\[
\Delta_{1,2} = -1, \enspace
\Delta_{1,3} = 2, \enspace
\Delta_{1,4} = -3, \enspace
\Delta_{2,3} = -1, \enspace
\Delta_{2,4} = 2, \enspace
\Delta_{3,4} = -1,
\]
we see that indeed $V\in\Gr_{2,4}^{<0}$.

In the notation of \cref{type_A_restriction}, we have $n=4$, $k_1 = 2$, and $(k'_1, k'_2) = (1,3)$. Hence $\rho_V(F_\bullet')$ is the partial flag obtained from $V$ by including the following subspaces:
\begin{itemize}[itemsep=4pt]
\item $V\cap F'_2$, since $n-k'_2 = 1$ is contained in the interval $(0,2) = (k_0,k_1)$; and
\item $V + F'_1$, since $n-k'_1 = 3$ is contained in the interval $(2,4) = (k_1,k_2)$.
\end{itemize}
The codomain of $\rho_V(F_\bullet')$ is isomorphic to $\mathbb{P}(V) \times \mathbb{P}(\CC^4/V) \cong \mathbb{P}^1\times\mathbb{P}^1$. (It may appear from \eqref{one_loop_amplituhedron} that $\mathcal{A}_{\Delta\ge 0}$ is an instance of a one-loop amplituhedron; it turns out it is not, because the definition of $\mathcal{A}_{n,k,m;1}(Z)$ requires that $m\ge 2$, and we would need to take $m=1$ to get $\mathcal{A}_{\Delta\ge 0}$. This is reflected in the fact that $\rho_V(F_\bullet')$ is computed by taking one intersection and one sum, rather than two sums.)

In order to calculate $\rho_V(F_\bullet')$, we write a generic element $F'_\bullet$ as follows:
\begin{align}\label{hexagon_domain_element}
F'_\bullet\in\Fl_{1,3;4}(\CC) \text{ is represented by } \begin{bmatrix}
1 & 0 & 0 \\
a & 1 & 0 \\
b & 0 & 1 \\
c & -d & e
\end{bmatrix},
\end{align}
i.e., $F'_1$ is the span of the first column of the matrix above, and $F'_2$ is the span of all three columns.

We find that $V\cap F'_2$ is spanned by
\[
(d+2e+3)\begin{bmatrix}
1 \\
-1 \\
1 \\
-1
\end{bmatrix} - (ad-be+c+d+e+1)\begin{bmatrix}
0 \\
-1 \\
2 \\
-3
\end{bmatrix}.
\]
We can choose coordinates on $\PP(V)\simeq\PP^1$ so that this is the point
\[
\frac{d+2e+3}{ad-be+c+d+e+1} \in \PP^1 \simeq \PP(V).
\]
Similarly, the element of $\PP(\CC^4/V)$ is spanned by $\transpose{(1,a,b,c)}$, or equivalently by
\[
(3a+3b+c+1)\begin{bmatrix}
-2 \\
-1 \\
4 \\
3
\end{bmatrix} + (6a+3b+3)\begin{bmatrix}
5/3 \\
5/3 \\
-5/3 \\
-5/3
\end{bmatrix},
\]
which we can identify with the point
\[
\frac{6a+3b+3}{3a+3b+c+1}\in\PP^1 \simeq \PP(\CC^4/V).
\]
Therefore (up to a change of coordinates on the codomain), the $V$-restriction map is given by
\begin{align}\label{hexagon_restriction_formula}
\rho_V(F'_\bullet) = \left(\frac{d+2e+3}{ad-be+c+d+e+1} \,,\, \frac{6a+3b+3}{3a+3b+c+1}\right) \in \PP^1 \times \PP^1
\end{align}
for $F'_\bullet$ as in \eqref{hexagon_domain_element}. 
\begin{remark}\label{affineexample}
    We chose our coordinates so that the flag amplituhedron will be contained in $\RR \times \RR \subseteq \PP^1 \times \PP^1$ and so that it exhibits reflective symmetries. One can alternatively run this computation using the formalism from \Cref{flagtope_affine} and \cref{flag_amplituhedra_affine} so that the flag amplituhedron is manifestly contained in an affine cell. In that setting, we can use 
\[ \cS = \begin{bmatrix}
1 & 0 & 0 & 0\\
a & 1 & 0  & 0 \\
b & 0 & 1 & 0\\
c & -d & e & 1
\end{bmatrix} \cdot  P_{\tilde{J}}^+
\enspace \text{ and } \enspace
h = \begin{bmatrix}
    1 & 0 & 0 & 0 \\
    -1 & 1 & 0 & 0 \\
    1 & -2 & 1 & 0 \\
    -1 & 3 & -3 & 1
\end{bmatrix} \in U^-_{<0},
\]
so that $V$ corresponds to $h\cdot P^+_{J}$. We have
\[ \widetilde \rho_\affine (\psi_{J'}^{-1}(h^{-1}\cS_\coset)) =  \begin{bmatrix}
    1 & \frac{d+2e+3}{ad-be+c+d+e+1} & 0 & 0 \\ 
    0 & 1 & 0 & 0 \\
    0 & 0 & 1 & \frac{2a+b+1}{3a+3b+c+1} \\
    0 & 0 & 0 &1
\end{bmatrix},  \]
so we obtain the same coordinates on the codomain (up to rescaling the $y$-coordinate by $3$). Note that if we were to choose a different $h$ with $V = h\cdot P_J^+$, we would potentially get a different parametrization, but the resulting coordinates would differ by fractional linear transformations.
\end{remark}

Since the nonnegative part of $\Fl_{1,3;4}(\CC)$ is the closure of its positive part (both for total positivity and for Pl\"{u}cker positivity), it suffices to calculate the image of the positive part and then take the closure. We calculate that the Pl\"{u}cker coordinates of $F'_\bullet$ are
\begin{align}\label{hexagon_pluckers}
\begin{gathered}
\Delta_1 = 1, \enspace
\Delta_2 = a, \enspace
\Delta_3 = b, \enspace
\Delta_4 = c, \\
\Delta_{1,2,3} = 1, \enspace
\Delta_{1,2,4} = e, \enspace
\Delta_{1,3,4} = d, \enspace
\Delta_{2,3,4} = ad-be+c.
\end{gathered}
\end{align}
In particular, $F_\bullet'\in\Fl_{1,3;4}^{\Delta > 0}$ if and only if
\begin{align}\label{plucker_positive_134}
a,b,c,d,e,ad-be+c>0.
\end{align}
By \cite[Example 10.1]{chevalier11}, we have $F'_\bullet\in\Fl_{1,3;4}^{>0}$ if and only if \eqref{plucker_positive_134} holds and additionally $be-c>0$.

We claim that both $\mathcal{A}_{\ge 0}$ and $\mathcal{A}_{\Delta\ge 0}$ are equal to the curved hexagon shown in \cref{figure_hexagon}, which is the basic semialgebraic set defined by
\begin{align}\label{hexagon_equations}
x,\,
y,\,
3-x,\,
3-y,\,
2xy-3x-3y+6\ge 0.
\end{align}
Since $\mathcal{A}_{\ge 0} \subseteq \mathcal{A}_{\Delta\ge 0}$, it suffices to show that $\rho_V(\Fl_{1,3;4}^{\Delta>0})$ is contained in the curved hexagon and that $\mathcal{A}_{\ge 0}$ is dense in the curved hexagon.

First we verify that $\rho_V(\Fl_{1,3;4}^{\Delta>0})$ is contained in the interior of the curved hexagon. That is, we assume that $(x,y)$ is as in \eqref{hexagon_restriction_formula} and that \eqref{plucker_positive_134} holds; we must then show that the inequalities \eqref{hexagon_equations} hold strictly. We see by inspection that $0 < x,y < 3$. (In fact, $[0,3] \times [0,3]$ is the product $\rho_V(\Gr_{3,4}^{\ge 0}) \times \rho_V(\Gr_{1,4}^{\ge 0})$ of the two corresponding Grassmannian amplituhedra under our choice of coordinates.) It remains to check that $2xy - 3x - 3y + 6 > 0$. We have
\[
2xy - 3x - 3y + 6 = \frac{3f}{(ad-be+c+d+e+1)(3a+3b+c+1)},
\]
where
\[
f = (ad-be+c)(3b+2c) + 2(be-c) + 2ae + 2bd + cd + 3a + e + 2.
\]
We must show that $f > 0$ when \eqref{plucker_positive_134} holds. We see that $f > 0$ if $be - c \ge 0$. To handle the case $c - be > 0$, we rewrite $f$ as
\[
f = 3b(ad-be+c) + 2acd + 2\big((c-be)(c-1)+1\big)+2ae+2bd+cd+3a+e.
\]
Therefore to conclude that $f > 0$ when $c - be > 0$, it suffices to show that $(c-be)(c-1)+1 \ge 0$, which can easily be verified in each of the cases $c \ge 1$ and $0 < c < 1$.

\subsection{Tilings}\label{sec:hexagon_tilings}
In order to show that $\mathcal{A}_{\ge 0}$ is dense in the curved hexagon, it suffices to exhibit a tiling in the sense of \cref{sec:problems_tilings}. In fact, $\mathcal{A}_{\ge 0}$ has several tilings; two different ones are shown in \cref{figure_hexagon_tilings}.
\begin{figure}[tb]
\begin{center}
\[
\begin{tikzpicture}[baseline=(current bounding box.center),scale=2]
\tikzstyle{out1}=[inner sep=0,minimum size=1.2mm,circle,draw=black,fill=black,semithick]
\pgfmathsetmacro{\s}{0.9};
\draw[thick,fill=teal!30](0,0)--(2,0)--plot[smooth,samples=100,domain=2:3](\x,{(6-3*\x)/(3-2*\x)})--(3,1)--(3,3)--(1,3)--plot[smooth,samples=100,domain=1:0](\x,{(6-3*\x)/(3-2*\x)})--(0,2)--cycle;
\draw[thick](0,1)--(3,1);
\draw[thick](0,2)--(3,2);
\node[out1](v1234)at(3,3){};
\node[out1](v2134)at(3,2){};
\node[out1](v3124)at(3,1){};
\node[out1](v1342)at(1,3){};
\node[out1](v4123)at(2,0){};
\node[out1](v2341)at(0,2){};
\node[out1](v3241)at(0,1){};
\node[out1](v4231)at(0,0){};
\node[inner sep=0]at(1.5,2.5){\scalebox{\s}{$(1324,2341)$}};
\node[inner sep=0]at(1.5,1.5){\scalebox{\s}{$(2314,3241)$}};
\node[inner sep=0]at(1.5,0.5){\scalebox{\s}{$(3214,4231)$}};
\end{tikzpicture}
\hspace*{48pt}
\begin{tikzpicture}[baseline=(current bounding box.center),scale=2]
\tikzstyle{out1}=[inner sep=0,minimum size=1.2mm,circle,draw=black,fill=black,semithick]
\pgfmathsetmacro{\s}{0.9};
\draw[thick,fill=teal!30](0,0)--(2,0)--plot[smooth,samples=100,domain=2:3](\x,{(6-3*\x)/(3-2*\x)})--(3,1)--(3,3)--(1,3)--plot[smooth,samples=100,domain=1:0](\x,{(6-3*\x)/(3-2*\x)})--(0,2)--cycle;
\draw[thick](1,0)--(1,1);
\draw[thick](0,1)--(1,1);
\draw[thick](1,1)--(3,1);
\draw[thick](1,1)--(1,3);
\draw[thick](1,1)--(2,2);
\draw[thick](2,2)--(3,2);
\draw[thick](2,2)--(2,3);
\node[out1](v1234)at(3,3){};
\node[out1](v2134)at(3,2){};
\node[out1](v1243)at(2,3){};
\node[out1](v3124)at(3,1){};
\node[out1](v2143)at(2,2){};
\node[out1](v1342)at(1,3){};
\node[out1](v4123)at(2,0){};
\node[out1](v3142)at(1,1){};
\node[out1](v2341)at(0,2){};
\node[out1](v4132)at(1,0){};
\node[out1](v3241)at(0,1){};
\node[out1](v4231)at(0,0){};
\node[inner sep=0]at(2.5,2.5){\scalebox{\s}{$(1234,2143)$}};
\node[inner sep=0]at(2.375,1.5){\scalebox{\s}{$(2134,3142)$}};
\node[inner sep=0]at(1.5,2.375){\scalebox{\s}{$(1243,3142)$}};
\node[inner sep=0]at(1.625,0.5){\scalebox{\s}{$(3124,4132)$}};
\node[inner sep=0]at(0.5,1.625){\scalebox{\s}{$(1342,3241)$}};
\node[inner sep=0]at(0.5,0.5){\scalebox{\s}{$(3142,4231)$}};
\end{tikzpicture}
\]
\caption{Two different tilings of the flag amplituhedron $\rho_V(\Fl_{1,3;4}^{\ge 0})$ from \cref{sec:hexagon} (see \cref{figure_hexagon}). Each tile is the image under $\rho_V$ of a $2$-dimensional cell $\Pi_{v,w}^{>0}$ of $\Fl_{1,3;4}^{\ge 0}$; we have labeled each cell by $(v,w)$.}
\label{figure_hexagon_tilings}
\end{center}
\end{figure}

We explain how to verify that these are valid tilings. Recall from \eqref{cell_decomposition} that the cells of $\Fl_{1,3;4}^{\ge 0}$ are the projections $\Pi_{v,w}^{>0}$ of the totally positive Richardson cells $R_{v,w}^{>0}$ of the complete flag variety $\Fl_{1,2,3;4}^{\ge 0}$; they are indexed by pairs $(v,w)$, where $v\le w$ in $\symgp{4}$ and $w(2) < w(3)$ (i.e.\ $w$ is a minimal-length coset representative of $\symgp{4}/\symgp{\tilde{J}}$).

Let us check that the interior of the curved rectangle at the top of the left tiling in \cref{figure_hexagon_tilings} is $\rho_V(\Pi_{1324,2341}^{>0})$. We calculate $R_{1324,2341}^{>0}$ using the parametrization of Marsh and Rietsch \cite{marsh_rietsch04}. We fix the reduced word $s_1s_2s_3$ for $2341$; the positive distinguished (i.e.\ rightmost) subexpression for $1324$ is then $s_1\underline{s_2}s_3$. To form the Marsh--Rietsch parametrization, we replace each $s_i$ with either $\dot{s}_i$ (if it is underlined) or $y_i(t)$ for a parameter $t > 0$ (otherwise):
\[
y_1(p)\dot{s}_2y_3(q) = 
\scalebox{1}{$\begin{bmatrix}
1 & 0 & 0 & 0 \\
p & 1 & 0 & 0 \\
0 & 0 & 1 & 0 \\
0 & 0 & 0 & 1
\end{bmatrix}$}
\scalebox{1}{$\begin{bmatrix}
1 & 0 & 0 & 0 \\
0 & 0 & -1 & 0 \\
0 & 1 & 0 & 0 \\
0 & 0 & 0 & 1
\end{bmatrix}$}
\scalebox{1}{$\begin{bmatrix}
1 & 0 & 0 & 0 \\
0 & 1 & 0 & 0 \\
0 & 0 & 1 & 0 \\
0 & 0 & q & 1
\end{bmatrix}$} =
\begin{bmatrix}
1 & 0 & 0 & 0 \\
p & 0 & -1 & 0 \\
0 & 1 & 0 & 0 \\
0 & 0 & q & 1
\end{bmatrix}.
\]
When $p,q> 0$, this matrix represents a point of $\Fl_{1,3;4}^{\geq 0}$ whose Pl\"{u}cker coordinates are
\[
\begin{gathered}
\Delta_1 = 1, \enspace
\Delta_2 = p, \enspace
\Delta_3 = 0, \enspace
\Delta_4 = 0, \\
\Delta_{1,2,3} = 1, \enspace
\Delta_{1,2,4} = 0, \enspace
\Delta_{1,3,4} = q, \enspace
\Delta_{2,3,4} = pq.
\end{gathered}
\]
This is obtained from \eqref{hexagon_pluckers} by setting $a=p$, $d=q$, and $b=c=e=0$, so by \eqref{hexagon_restriction_formula}, $\rho_V$ sends the point above to
\[
\left(\frac{q+3}{pq+q+1} \,,\, \frac{6p+3}{3p+1}\right).
\]
For fixed $p > 0$, as $q$ ranges over $\mathbb{R}_{>0}$ this gives the (open) horizontal line segment from $(\frac{1}{p+1},\frac{6p+3}{3p+1})$ (which lies on the hyperbola $(2x-3)(2y-3) = -3$) to $(3,\frac{6p+3}{3p+1})$. Then letting $p$ range over $\mathbb{R}_{>0}$, we see that $\rho_V$ is injective on $\Pi_{1324,2341}^{>0}$ and its image is the interior of the curved rectangle as claimed.

Let us similarly check that the interior of the curved rectangle at the bottom of the left tiling in \cref{figure_hexagon_tilings} is $\rho_V(\Pi_{3214,4231}^{>0})$. We fix the reduced word $s_1s_2s_3s_2s_1$ for $4231$; the positive distinguished subexpression for $3214$ is then $\underline{s_1}s_2s_3\underline{s_2s_1}$. This gives the parametrization
\[
\dot{s}_1y_2(p)y_3(q)\dot{s}_2\dot{s}_1 = 
\scalebox{0.6}{$\begin{bmatrix}
0 & -1 & 0 & 0 \\
1 & 0 & 0 & 0 \\
0 & 0 & 1 & 0 \\
0 & 0 & 0 & 1
\end{bmatrix}$}
\scalebox{0.6}{$\begin{bmatrix}
1 & 0 & 0 & 0 \\
0 & 1 & 0 & 0 \\
0 & p & 1 & 0 \\
0 & 0 & 0 & 1
\end{bmatrix}$}
\scalebox{0.6}{$\begin{bmatrix}
1 & 0 & 0 & 0 \\
0 & 1 & 0 & 0 \\
0 & 0 & 1 & 0 \\
0 & 0 & q & 1
\end{bmatrix}$}
\scalebox{0.6}{$\begin{bmatrix}
1 & 0 & 0 & 0 \\
0 & 0 & -1 & 0 \\
0 & 1 & 0 & 0 \\
0 & 0 & 0 & 1
\end{bmatrix}$}
\scalebox{0.6}{$\begin{bmatrix}
0 & -1 & 0 & 0 \\
1 & 0 & 0 & 0 \\
0 & 0 & 1 & 0 \\
0 & 0 & 0 & 1
\end{bmatrix}$} = 
\begin{bmatrix}
0 & 0 & 1 & 0 \\
0 & -1 & 0 & 0 \\
1 & 0 & -p & 0 \\
q & 0 & 0 & 1
\end{bmatrix}.
\]
When $p,q> 0$, this matrix represents a point of $\Fl_{1,3;4}^{\geq 0}$ whose Pl\"{u}cker coordinates are
\[
\begin{gathered}
\Delta_1 = 0, \enspace
\Delta_2 = 0, \enspace
\Delta_3 = 1, \enspace
\Delta_4 = q, \\
\Delta_{1,2,3} = 1, \enspace
\Delta_{1,2,4} = q, \enspace
\Delta_{1,3,4} = 0, \enspace
\Delta_{2,3,4} = pq.
\end{gathered}
\]
Recall that the Pl\"{u}cker coordinates in each row are defined modulo rescaling. In order to obtain this point from \eqref{hexagon_pluckers}, we set $c = bq$, $d = \frac{pq}{a}$, and $e=q$; then we send $b\to\infty$ before sending $a\to\infty$. By \eqref{hexagon_restriction_formula}, $\rho_V$ sends the point above to
\begin{multline*}
\lim_{a\to\infty}\lim_{b\to\infty}\left(\frac{\frac{pq}{a}+2q+3}{pq+\frac{pq}{a}+q+1} \,,\, \frac{6a+3b+3}{3a+3b+bq+1}\right) \\
=
\lim_{a\to\infty}\left(\frac{\frac{pq}{a}+2q+3}{pq+\frac{pq}{a}+q+1} \,,\, \frac{3}{3+q}\right)
=
\left(\frac{2q+3}{pq+q+1} \,,\, \frac{3}{3+q}\right).
\end{multline*}
For fixed $q > 0$, as $p$ ranges over $\mathbb{R}_{>0}$ this gives the (open) horizontal line segment from $(0,\frac{3}{3+q})$ to $(\frac{2q+3}{q+1},\frac{3}{3+q})$ (and the latter endpoint lies on the hyperbola $(2x-3)(2y-3) = -3$). Then letting $q$ range over $\mathbb{R}_{>0}$, we see that $\rho_V$ is injective on $\Pi_{3214,4231}^{>0}$ and its image is the interior of the curved rectangle as claimed.

We have performed the same calculation for every tile in \cref{figure_hexagon_tilings}, but omit the details since they are similar to the cases above (with $\rho_V(\Pi_{3214,4231}^{>0})$ being the most difficult).

\subsection{Canonical differential form}\label{sec:hexagon_form}
We calculate the canonical differential form of $\mathcal{A}_{\ge 0}$ as a positive geometry in $\PP^1\times\PP^1$. (Our calculations and the fact that $\mathcal{A}_{\ge 0}$ is a positive geometry follow from similar arguments as in work of Telen \cite[Example 1.10]{telen}.)

The canonical form of $\mathcal{A}_{\ge 0}$ is $f(x,y) dx \wedge dy$, where $f(x,y)$ is the canonical rational function. We tile $\mathcal{A}_{\geq 0}$ into $9$ regions, which are the connected components after deleting the $4$ lines $x=1$, $x=2$, $y=1$, $y=2$. Since canonical rational functions are additive (with appropriate signs), it suffices to find the canonical rational function of each region and then sum them all.

Seven of the regions are squares. The canonical rational function of the unit square $[0,1]\times [0,1]$ is
\[
\frac{1}{xy(1-x)(1-y)},
\]
and we obtain the canonical rational functions of the seven squares by translation.

The two remaining regions are curved triangles. It suffices to determine the canonical rational function of the curved triangle with vertices $(0,0)$, $(1,0)$, and $(0,1)$, bounded by the coordinate axes and the hyperbola $(2x+1)(2y+1)=3$. (The two curved triangles in our tiling are each obtained by reflecting over a coordinate axis and then translating.) We claim that its canonical rational function is
\[
\frac{1}{xy(1-x-y-2xy)}.
\]
Indeed, we can check that it has the correct residues on the coordinate axes. To check its residue along the curved edge, write
\[
\frac{1}{xy(1-x-y-2xy)}dx\wedge dy = \frac{-1}{xy(2x+1)}dx \wedge \frac{d(1-x-y-2xy)}{1-x-y-2xy},
\]
and observe that $y(2x+1) = 1-x$ on the curved edge.

Finally, by summing the canonical rational functions of the $9$ regions (using \texttt{Maple}), we get that the canonical rational function of $\mathcal{A}_{\ge 0}$ is
\[
f(x,y) = \frac{6(2xy-3x-3y+9)}{xy(3-x)(3-y)(2xy-3x-3y+6)}.
\]

\bibliographystyle{alpha}
\bibliography{ref}

\end{document}